\newtheorem{theorem}{Theorem}
\newtheorem{lemma}{Lemma}
\newtheorem{definition}{Definition}
\newtheorem{proposition}{Proposition}
\newtheorem{corollary}{Corollary}
\newtheorem{question}{Question}
\newcommand{\fullproof}[1]{}
\newcommand*{\ineq}[2][]{%
  \begingroup
    \refstepcounter{equation}%
    \ifx\\#1\\%
    \else
      \label{#1}%
    \fi
    \relpenalty=10000 %
    \binoppenalty=10000 %
    \@eqnnum \ \ensuremath{%
      #2%
    }%
  \endgroup
}
\newcommand{\seq}{\succcurlyeq}
\newcommand{\Var}{{\mathbb P}}
\newcommand{\U}{U}
\newcommand{\length}[1]{\lvert#1\rvert}
\newcommand{\ignore}[1]{}
\newcommand{\peq}{\preccurlyeq}
\newcommand{\sfun}{\msuccfct}
\newcommand{\itlht}{{\sf ITL^{ht}}}
\newcommand{\iltl}{{\sf ITL^e}}
\newcommand{\itlb}{{\sf ITL^p}}
\newcommand{\lang}{{\mathcal L}}
\newcommand{\Qbase}[1][]{}  
\newcommand{\Qstrat}[1][]{^{{\str}\ifx#1\relax \relax\else, #1\fi}}
\newcommand{\str}{{\rm e }} 
\def\lang{{\cal L}}
\renewcommand{\tnext}{\medcircle}
\def\R{\mathop{\cal R}}
\def\U{\mathop{\cal U}}
\def\bisim{\mathrel{\cal Z}}
\title{Bisimulations for intuitionistic temporal logics\thanks{This research was partially supported by ANR-11-LABX-0040-CIMI within the program ANR-11-IDEX-0002-02.}}
\author[1]{Philippe Balbiani\footnote{\href{mailto:philippe.balbiani@irit.fr}{\tt philippe.balbiani@irit.fr}}}
\author[1]{Joseph Boudou\footnote{\href{mailto:joseph.boudou@irit.fr}{\tt joseph.boudou@irit.fr}}}
\author[1,2]{Mart\'in Di\'eguez\footnote{\href{mailto:martin.dieguez@enib.fr}{\tt martin.dieguez@enib.fr}}}
\author[1,3]{David Fern\'andez-Duque\footnote{\href{mailto:david.fernandezduque@ugent.be}{\tt david.fernandezduque@ugent.be}}}
\affil[1]{IRIT, Toulouse University. Toulouse, France}
\affil[2]{CERV, ENIB,LAB-STICC. Brest, France}
\affil[3]{Department of Mathematics, Ghent University. Ghent, Belgium}
\begin{document}
\maketitle

\begin{abstract} 
We introduce bisimulations for the logic $\iltl$ with $\tnext$ (`next'), $\U$ (`until') and $\R$ (`release'), an intuitionistic temporal logic based on structures $(W,\peq,\msuccfct)$, where $\peq$ is used to interpret intuitionistic implication and $\msuccfct$ is a $\peq$-monotone function used to interpret the temporal modalities. Our main results are that $\diam$ (`eventually'), which is definable in terms of $\U$, cannot be defined in terms of $\tnext$ and $\ubox$, and similarly that $\ubox$ (`henceforth'), definable in terms of $\R$, cannot be defined in terms of $\tnext$ and $\U$, even over the smaller class of here-and-there models.
\end{abstract}

\section{Introduction}

The definition and study of full combinations of modal~\cite{Chagrov1997} and intuitionistic~\cite{DVD86,MInt} logics can be quite challenging~\cite{Simpson94}, and temporal logics, such as $\sf LTL$ \cite{pnueli}, are no exception. Some intuitionistic analogues of temporal logics have been proposed, including logics with `past' and `future' tenses \cite{Ewald} or with `next' \cite{Davies96,KojimaNext}, and `henceforth' \cite{KamideBounded}. We proposed an alternative formulation in \cite{Boudou2017}, where we defined the logics $\iltl$ and $\itlb$ using semantics similar to those of {\em expanding} and {\em persistent} products of modal logics, respectively \cite{pml}, and the tenses $\tnext$ (`next'), $\diam$ (`eventually'), and $\ubox$ (`henceforth'). $\iltl$ in particular differs from previous proposals (e.g.~\cite{Ewald,PS86}) in that we consider minimal frame conditions that allow for all formulas to be upward-closed under the intuitionistic preorder, which we denote $\peq$. We then showed that $\iltl$ with $\tnext$ (`next'), $\diam$ (`eventually'), and $\ubox$ (`henceforth') is decidable, thus obtaining the first intuitionistic analogue of $\sf LTL$ which contains the three tenses, is conservative over propositional intuitionistic logic, is interpreted over unbounded time, and is known to be decidable.

Note that both $\diam$ and $\ubox$ are taken as primitives, in contrast with the classical case, where $\diam \varphi$ may be defined by $\diam\varphi\equiv \neg \ubox \neg \varphi$, whereas the latter equivalence is not intuitionistically valid. The same situation holds in the more expressive language with $\U$ (`until'): while the language with $\tnext$ and $\U$ is equally expressive to classical monadic first-order logic with $\leq$ over $\mathbb N$~\cite{Gabbay80}, $\U$ admits a first-order definable intuitionistic dual, $\R$ (`release'), which cannot be defined in terms of $\U$ using the classical definition. However, this is not enough to conclude that $\R$ cannot be defined in a different way. Thus, while in \cite{Boudou2017} we explored the question of decidability, here we will focus on {\em definability;} which of the modal operators can be defined in terms of the others? As is well-known, $\diam \varphi \equiv \top \U \varphi$ and $\ubox \varphi \equiv \bot \R \varphi$; these equivalences remain valid in the intuitionistic setting. Nevertheless, we will show that $\ubox$ cannot be defined in terms of $\U$, and $\diam$ cannot be defined in terms of $\R$; in order to prove this, we will develop a theory of bisimulations on $\iltl$ models.

Following Simpson \cite{Simpson94} and other authors, we interpret the language of $\iltl$ using bi-relational structures, with a partial order $\peq$ to interpret intuitionistic implication, and a function or relation, which we denote $\msuccfct$, representing the passage of time. Alternatively, one may consider topological interpretations \cite{Davoren2009}, but we will not discuss those here. Various intuitionistic temporal logics have been considered, using variants of these semantics and different formal languages. The main contributions include:

\begin{itemize}\itemsep0pt	
	\item Davies' intuitionistic temporal logic with $\tnext$ \cite{Davies96} was provided Kripke semantics and a complete deductive system by Kojima and Igarashi \cite{KojimaNext}.	
	\item Logics with $\tnext,\ubox$ were axiomatized by Kamide and Wansing \cite{KamideBounded}, where $\ubox$ was interpreted over bounded time.	
	\item Nishimura \cite{NishimuraConstructivePDL} provided a sound and complete axiomatization for an intuitionistic variant of the propositional dynamic logic $\sf PDL$.	
	\item Balbiani and Di\'eguez axiomatized the here-and-there variant of $\sf LTL$ with $\tnext,\diam,\ubox$ \cite{BalbianiDieguezJelia}, here denoted $\itlht$.	
	\item Fern\'andez-Duque \cite{DFD2016} proved the decidability of a logic based on topological semantics with $\tnext,\diam$ and a universal modality.	
	
	\item The authors \cite{Boudou2017} proved that the logic $\iltl$ with $\tnext,\diam,\ubox$ has the strong finite model property and hence is decidable, yet the logic $\itlb$, based on a more restrictive class of frames, does not enjoy the fmp.
\end{itemize}

In this paper, we extend $\iltl$ to include $\U$ (`until') and $\R$ (`release'). We will introduce different notions of bisimulation which preserve formulas with $\tnext$ and each of  $\diam$, $\ubox$, $\U$ and $\R$. With this, we will show that $\R$ (or even $\ubox$) may not be defined in terms of $\U$ over the class of here-and-there models, while $\diam$ {\em can} be defined in terms of $\ubox$ and $\U$ can be defined in terms of $\R$ over this class. However, we show that over the wider class of expanding models, $\diam$ cannot be defined in terms of $\ubox$.

\section{Syntax and semantics} \label{SecSynSem}

We will work in sublanguages of the language $\lang$ given by the following grammar:
\begin{equation*}
  \varphi, \psi \gramdef p \alt \bot \alt
    \varphi \wedge \psi \alt \varphi \vee \psi \alt \varphi \imp \psi \alt
    \tnext \varphi \alt \diam\varphi \alt \ubox\varphi\alt \varphi \U \psi \alt \varphi \R \psi
\end{equation*}
where $p$ is an element of a countable set of propositional variables $\Var$. All sublanguages we will consider include all Boolean operators and $\tnext$, hence we denote them by displaying the additional connectives as a subscript; for example, $\lang_{\diam\ubox}$ denotes the $\U$-free, $\R$-free fragment. As an exception to this general convention, $\lang_{\tnext}$ denotes the fragment without $\diam,\ubox,{\U}$ or $\R$.

Given any formula $\varphi $,
we define the \emph{length} of $\varphi$ (in symbols, $\length{\varphi}$) recursively as follows:

\begin{itemize}[itemsep=0pt]
	\item $\length{p} = \length{\bot} = 0$;
	\item $\length{\phi \odot \psi}= 1 + \length{\phi} + \length{\psi}$, with $\odot \in \lbrace \vee, \wedge,\imp, \R, \U \rbrace$;	
	\item $\length{\odot \psi}= 1 + \length{\psi}$, with $\odot \in \lbrace \neg, \tnext, \ubox, \diam \rbrace$.
\end{itemize}
\noindent Broadly speaking, the length of a formula $\varphi$ corresponds to the number of connectives appearing in $\varphi$.

\subsection{Dynamic posets}

Formulas of $\lang$ are interpreted over dynamic posets. A {\em dynamic poset} is a tuple $\mathcal D = (W,\peq,\sfun)$,
where 
$W$ is a non-empty set of states,
$\irel$ is a partial order,
and
$\sfun$ is a function from $W$ to $W$ satisfying the {\em forward confluence} condition that for all $ w, v \in W, $ if $ w \irel v $ then $ \msuccfct(w) \irel \msuccfct(v).$ An {\em intuitionistic dynamic model,} or simply {\em model,} is a tuple $\modelbase$ consisting of a dynamic poset equipped
with a valuation function $V$ from $W$ to sets of propositional variables that is {$\peq$-monotone,} in the sense that
for all $ w, v \in W, $ if $ w \irel v $ then $ V(w) \subseteq V(v).$ In the standard way, we define $\msuccfct[^0](w) = w$ and,
for all $k > 0$, $\msuccfct[^k](w) = \msuccfct\left(\msuccfct[^{k-1}](w)\right)$. Then we define the satisfaction relation $\sat$ inductively by:

\begin{multicols}2
\noindent\begin{enumerate}[itemsep=0pt]
	\item $\model, w \sat p $  iff $p \in V(w) $;
	\item $\model, w \nsat \bot$;
	\item $\model, w \sat \varphi\wedge \psi$  iff $  \model, w \sat \varphi $ and $\model, w \sat \psi$;
	\item $\model, w \sat \varphi \vee \psi $  iff  $ \model, w \sat \varphi $ or $\model, w \sat \psi$;
	\item $\model, w \sat \tnext \varphi $   iff $ \model, \msuccfct(w) \sat \varphi $;
		\item  $\model, w \sat \varphi \imp \psi $ iff $\forall v \succcurlyeq w $, if $\model, v \sat \varphi$, then $\model, v \sat \psi $;
		\columnbreak
	\item $\model, w \sat \diam \varphi $  iff there exists $ k \text{ s.t. } \model, \msuccfct[^k](w) \sat \varphi$;
	\item $\model, w \sat \ubox \varphi $  iff for all $ k $, $ \model, \msuccfct[^k](w) \sat \varphi$;
	\item  $\model, w \sat \varphi \U \psi $ iff there exists $k \ge 0 \hbox{ s.t. } \model, \msuccfct[^k](w) \sat \psi$ and $\forall i \in [0,k)$, $\model, \msuccfct[^i](w) \sat \varphi$;	
	\item 	$\model, w \sat \varphi \R \psi $ iff for all $k \ge 0$, either $\model, \msuccfct[^k](w) \sat \psi$, or $ \exists i \in [ 0 , k)$ s.t.~$\model, \msuccfct[^i](w) \sat \varphi$.
\end{enumerate}
\end{multicols}
As usual, a formula $\varphi$ is {\em satisfiable over a class of models $\Omega$} if there is a model $\model\in \Omega$ and a world $w$ of $\model$ so that $\model,w\sat\varphi$, and {\em valid over $\Omega$} if, for every world $w$ of every model $\model\in \Omega$, $\model,w\sat\varphi$. Satisfiability (validity) over the class of models based on an arbitrary dynamic poset will be called {\em satisfiability (validity)} for $\iltl$, or {\em expanding domain linear temporal logic.}\footnote{Note that in \cite{Boudou2017} we used `$\iltl$' to denote the fragment of this logic without $\U,\R$.}

The relation between dynamic posets and expanding products of modal logics is detailed in \cite{Boudou2017}, where the following is also shown. Below, we use the notation $\llbracket\varphi\rrbracket=\{w\in W \mid \model,w\sat \varphi \}.$

\begin{lemma}
Let $\mathcal D=(W,{\irel},\msuccfct)$, where $(W,{\irel})$ is a poset and $\msuccfct\colon W\to W$ is any function. Then, $\mathcal D$ is a dynamic poset if and only if, for every valuation $V$ on $W$ and every formula $\varphi$, $\llbracket \varphi \rrbracket$ is {\em $\irel$-monotone}, i.e., if $w\in \llbracket \varphi\rrbracket$ and $v\seq w$, then $v \in \llbracket \varphi \rrbracket$.
\end{lemma}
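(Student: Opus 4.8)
The plan is to establish the two implications separately: the ``only if'' direction by induction on the structure of formulas, and the ``if'' direction contrapositively, by exhibiting an explicit valuation and formula witnessing a failure of $\irel$-monotonicity whenever forward confluence fails.

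For the forward direction, suppose $\mathcal D$ satisfies forward confluence. First I would record the auxiliary fact that every iterate $\msuccfct[^k]$ is again forward confluent, i.e.\ $w\irel v$ implies $\msuccfct[^k](w)\irel\msuccfct[^k](v)$; this is an immediate induction on $k$, since $\msuccfct[^0]$ is the identity and a composition of two forward-confluent maps is forward confluent. With this in hand I would prove, by induction on $\length\varphi$, that $\llbracket\varphi\rrbracket$ is $\irel$-monotone for every valuation $V$. The atomic case $\varphi=p$ is exactly the $\peq$-monotonicity of $V$, and $\varphi=\bot$ is trivial; the cases $\varphi=\psi_1\wedge\psi_2$ and $\varphi=\psi_1\vee\psi_2$ follow since monotone sets are closed under intersection and union; and $\varphi=\psi_1\imp\psi_2$ is immediate because the satisfaction clause for $\imp$ already quantifies over all $v\seq w$. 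The temporal cases are where forward confluence is used: if $w\in\llbracket\tnext\psi\rrbracket$ and $v\seq w$, then $\msuccfct(v)\seq\msuccfct(w)$ by confluence and $\msuccfct(w)\in\llbracket\psi\rrbracket$, so $\msuccfct(v)\in\llbracket\psi\rrbracket$ by the induction hypothesis, i.e.\ $v\in\llbracket\tnext\psi\rrbracket$. The clauses for $\diam,\ubox,\U,\R$ are handled identically, transporting each relevant time point from $w$ to $v$ via confluence of the appropriate iterate $\msuccfct[^k]$ (and, for $\U$ and $\R$, also of each $\msuccfct[^i]$ with $i<k$) and then applying the induction hypothesis to the subformulas; in all four cases the same witness $k$ works for $v$ that worked for $w$.

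For the converse I argue by contraposition. Suppose $\mathcal D$ is not a dynamic poset, so there exist $w\irel v$ with $\msuccfct(w)\not\irel\msuccfct(v)$. Fix a propositional variable $p$ and define $V(u)=\{p\}$ if $u\seq\msuccfct(w)$ and $V(u)=\varnothing$ otherwise; this $V$ is $\peq$-monotone because the set $\{u : u\seq\msuccfct(w)\}$ is upward closed under $\irel$. Then $\llbracket p\rrbracket=\{u : u\seq\msuccfct(w)\}$, and I take $\varphi=\tnext p$. Since $\msuccfct(w)\seq\msuccfct(w)$ we have $w\in\llbracket\tnext p\rrbracket$, whereas $\msuccfct(w)\not\irel\msuccfct(v)$ gives $\msuccfct(v)\notin\llbracket p\rrbracket$, hence $v\notin\llbracket\tnext p\rrbracket$. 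As $v\seq w$, this shows $\llbracket\tnext p\rrbracket$ is not $\irel$-monotone, contradicting the hypothesis of the converse and completing the argument.

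The proof is essentially routine; the only points requiring a little care are the lemma that all iterates $\msuccfct[^k]$ inherit forward confluence and the verification of the $\U$ and $\R$ clauses, where one must transport both the chosen bound $k$ and every intermediate index $i<k$ in the correct direction — which is precisely what confluence of all the iterates delivers. I do not anticipate a genuine obstacle.
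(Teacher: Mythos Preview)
Your proposal is correct and follows essentially the same approach as the paper: the forward direction is a routine structural induction on formulas (the paper merely notes this and refers to \cite{Boudou2017} for the $\diam,\ubox$ cases, remarking that $\U,\R$ are analogous), and you supply the standard contrapositive argument for the converse via $\tnext p$ and the principal up-set of $\msuccfct(w)$, which the paper leaves entirely implicit. Your write-up is in fact more complete than the paper's sketch.
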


The proof that all valuations on a dynamic poset are $\peq$-monotone proceeds by a standard structural induction on formulas, and the cases for $\U,\R$ are similar to those for $\diam,\ubox$ in \cite{Boudou2017}. This suggests that dynamic posets provide suitable semantics for intuitionistic $\sf LTL$. Moreover, dynamic posets are convenient from a technical point of view:

\begin{theorem}[\cite{Boudou2017}]\label{TheoBound}
There exists a computable function $B$ such that any formula $\varphi \in \lang_{\diam\ubox}$ satisfiable (resp.~falsifiable) on an arbitrary model is satisfiable (resp.~falsifiable) on a model whose size is bounded by $B(\nos\varphi)$.
\end{theorem}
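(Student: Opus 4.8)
The plan is to turn an arbitrary model witnessing the (non-)satisfaction of $\varphi$ into a bounded one by two successive collapses: a \emph{horizontal} one in the intuitionistic direction $\peq$, which bounds the width of each ``time-slice'', and a \emph{vertical} one in the temporal direction $\msuccfct$, which folds the $\msuccfct$-orbit into a lasso and so bounds the number of time-slices. Since the construction will preserve the truth value of every subformula of $\varphi$ at the distinguished world, the satisfiable and falsifiable cases are handled at once (read $\nsat\varphi$ for $\sat\varphi$ at that world).

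First I would reduce to a \emph{point-generated} model. Given $\model,w_0\sat\varphi$, let $W'$ be the least set containing $w_0$ and closed under $\msuccfct$ and under $\peq$-successors, and let $\model'$ be the restriction of $\model$ to $W'$. Forward confluence gives $\msuccfct(W')\subseteq W'$, and $W'$ is by construction upward closed, so $\model'$ is again a dynamic model; writing $U_k=\{v\in W' : \msuccfct[^k](w_0)\peq v\}$ one has $W'=\bigcup_{k\ge 0}U_k$ and $\msuccfct(U_k)\subseteq U_{k+1}$. A routine induction on subformulas — using that $\imp$ inspects only $\peq$-successors and $\tnext,\diam,\ubox$ only the $\msuccfct$-orbit, all of which stay inside $W'$ — shows $\model',v\sat\psi\iff\model,v\sat\psi$ for every $v\in W'$ and every subformula $\psi$ of $\varphi$. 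So we may assume the model is point-generated.

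Next comes the horizontal bound. Fix a finite set $\Sigma$ closed under subformulas and containing the one-step unfoldings $\tnext\diam\chi,\tnext\ubox\chi$ of the temporal subformulas of $\varphi$, and to each world $v$ associate its type $t(v)=\{\psi\in\Sigma: \model',v\sat\psi\}$. By $\peq$-monotonicity of the extensions of formulas (the Lemma above), $v\peq v'$ implies $t(v)\subseteq t(v')$, so collapsing each slice $U_k$ along $t$ yields a finite poset of size at most $2^{\lvert\Sigma\rvert}$, ordered by inclusion of types. Establishing a truth lemma for this collapse is the first technical point: for $\imp$ one needs subformula-closure of $\Sigma$; for the ``reflexive--transitive'' modalities $\diam$ and $\ubox$ one needs the appropriate (transitive-style) filtration; and, crucially, $\msuccfct$ does not literally descend to the type-quotient (a world's type need not determine its successor's type), so one does not collapse the model outright but records, between consecutive collapsed slices, the relation of $\msuccfct$-compatibility, passing to a quasimodel-style object from which genuine $\msuccfct$-threads can later be recovered.

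Finally, the vertical bound, which is where I expect the real work to lie. Decorate each collapsed slice $\mathcal S_k$ with its distinguished point, its type-labels, and the \emph{temporal defect} data recording, for each of its worlds and each subformula $\diam\chi$ of $\varphi$, whether that eventuality is still pending. Only boundedly many decorated slices exist up to isomorphism, so there are $i<j$ with $\mathcal S_i$ and $\mathcal S_j$ isomorphic; choosing $i$ past the stage at which every eventuality that is ever fulfilled along some $\msuccfct$-thread leaving time $\ge i$ has been fulfilled, and $j$ past the stage at which every such eventuality is re-fulfilled within $[i,j)$, one glues $\mathcal S_i,\dots,\mathcal S_{j-1}$ into a cycle and obtains a finite model of size at most $(\text{prefix}+\text{loop length})\times 2^{\lvert\Sigma\rvert}$, hence bounded by a (roughly doubly exponential, clearly computable) function $B(\length{\varphi})$, in which $w_0$ still (non-)satisfies $\varphi$. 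The main obstacle is precisely this folding: unlike in classical $\sf LTL$, the modalities $\diam$ and $\ubox$ must be controlled not along a single ray but along \emph{every} $\msuccfct$-orbit in the poset simultaneously, and these orbits are coupled through forward confluence, so that clearing all pending eventualities (and creating no spurious universal obligations) on all orbits at the same loop point is what forces the careful defect bookkeeping rather than a one-line pigeonhole argument.
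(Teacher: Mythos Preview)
The paper does not contain a proof of this theorem at all: it is stated with the citation \cite{Boudou2017} and no argument, so there is nothing in the present paper to compare your attempt against. Your sketch is therefore not a reconstruction of anything here but an independent attempt at the result from the cited earlier paper.

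On the substance of your sketch: the overall shape (restrict to the part generated from $w_0$, collapse each time-slice by $\Sigma$-types, then fold the temporal dimension into a lasso by pigeonhole on decorated slices) is indeed the standard architecture for such finite-model-property arguments, and you correctly flag the two genuine pressure points: that $\msuccfct$ does not descend to the type quotient, forcing a detour through a quasimodel-like intermediate object, and that the loop must simultaneously discharge all pending $\diam$-eventualities along every $\peq$-thread without violating any $\ubox$-obligation. What is missing is precisely the content that makes this work: you do not say what the quasimodel is, what its admissible transitions are, or how one extracts an honest dynamic-poset model from a finite lasso of decorated slices while keeping forward confluence and the truth lemma intact. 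In the intuitionistic setting this last step is not routine --- the coupling of orbits through $\peq$ means a naive gluing at matching slices can create new $\peq$-successors (hence falsify implications) or new $\msuccfct$-reachable worlds (hence falsify $\ubox$'s) that were not present before --- and your ``defect bookkeeping'' remark names the problem without indicating a mechanism that solves it. So as written this is a plausible plan rather than a proof; to complete it you would need to spell out the quasimodel/labelled-structure formalism and prove the corresponding truth and realization lemmas, which is exactly the technical core of \cite{Boudou2017}.
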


It follows that the $\lang_{\diam\ubox}$-fragment of $\iltl$ is decidable. Moreover, as we will see below, many of the familiar axioms of classical $\sf LTL$ are valid over the class of dynamic posets, making them a natural choice of semantics for intuitionistic $\sf LTL$. 

\subsection{Persistent posets}

Despite the appeal of dynamic posets, in the literature one typically considers a more restrictive class of frames, similar to persistent frames, as we define them below.

\begin{definition}
Let $(W,{\preccurlyeq})$ be a poset.
If $\msuccfct\colon W\to W$ is such that,
whenever $v \succcurlyeq \sfun(w)$, there is $u\succcurlyeq w$ such that $v=\sfun (u)$,
we say that $\sfun$ is {\em backward confluent}.
If $\sfun$ is both forward and backward confluent, we say that it is {\em persistent}.
A tuple $(W,{\preccurlyeq},\msuccfct)$ where $\msuccfct$ is persistent is a {\em persistent
intuitionistic temporal frame}, and the set of valid formulas over the class of persistent
intuitionistic temporal frames is denoted $\itlb$, or {\em persistent domain $\sf LTL$}.
\end{definition}

As we will see, persistent frames do have some technical advantages over arbitrary dynamic posets. Nevertheless, they have a crucial disadvantage:

\begin{theorem}[\cite{Boudou2017}]\label{TheoPersNonFin}
The logic $\itlb$ does not have the finite model property, even for formulas in $\mathcal L_{\diam\ubox}$.
\end{theorem}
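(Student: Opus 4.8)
The plan is to exhibit a single infinite persistent intuitionistic temporal frame (together with a valuation) that satisfies a fixed formula $\varphi_0 \in \mathcal L_{\diam\ubox}$, and then to argue that no finite persistent model can satisfy $\varphi_0$. A natural candidate is a frame whose "time" direction is $\mathbb N$ and whose intuitionistic $\peq$-direction forces an ever-growing gadget: the persistence (backward confluence) condition is exactly what prevents us from folding the time axis, because it ties the $\peq$-structure at $\msuccfct(w)$ back to that at $w$. Concretely, I would build $W$ so that each world is a pair encoding a time coordinate and a "height" coordinate, let $\msuccfct$ advance time, and let $\peq$ increase height; backward confluence then forces heights to be unbounded as time grows. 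The formula $\varphi_0$ should say something like "$\diam$ (some proposition $q$ holds) but at no finite stage is $q$ already true, and along the way the $\peq$-structure keeps branching", using $\ubox$ to propagate a constraint forward in time. The key is to pick $\varphi_0$ so that in any model, satisfaction of $\varphi_0$ forces the existence of an infinite strictly $\peq$-increasing sequence, or an infinite set of pairwise $\peq$-incomparable worlds, which a finite poset cannot contain.

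First I would fix the witnessing model $\mathcal M_\infty$ explicitly and verify, by the semantic clauses in Section~\ref{SecSynSem}, that $\mathcal M_\infty, w_0 \sat \varphi_0$ for a suitable root $w_0$; this is a routine but careful check involving the $\diam$, $\ubox$ and $\imp$ clauses, and I would also verify that the frame is genuinely persistent, i.e.\ that $\msuccfct$ is both forward and backward confluent. Second, and this is the heart of the argument, I would assume for contradiction that some finite persistent model $\mathcal N$ with world $v_0$ satisfies $\varphi_0$. Using backward confluence repeatedly, I would "pull back" the $\peq$-structure witnessed at $\msuccfct[^k](v_0)$ to $v_0$ for arbitrarily large $k$; the forward-looking constraints encoded by $\ubox$ in $\varphi_0$, combined with the fact that $\diam$ is witnessed only at some finite but unbounded stage, should yield, in the finite model, an infinite $\peq$-chain or antichain — contradicting finiteness of the poset $(W_{\mathcal N}, \peq_{\mathcal N})$.

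The main obstacle I anticipate is the second step: designing $\varphi_0$ so that its truth genuinely compels unboundedness of the $\peq$-structure in \emph{every} persistent model, not merely in the intended one. The difficulty is that $\iltl$ over persistent frames validates many classical $\sf LTL$ axioms, so naive attempts to encode a counter will collapse; one has to exploit precisely the intuitionistic reading of $\imp$ together with $\ubox$ to build a "ratchet" that persistence cannot release. A plausible device is to use a proposition whose truth at a world is forced to be strictly stronger (in the $\peq$-order) at later times, say by a formula of the shape $\ubox\bigl((p \imp q) \imp \tnext \neg(p \imp q)\bigr)$ or a similar self-undermining implication, so that no finite intuitionistic model can reconcile it with $\diam q$. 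Once the right $\varphi_0$ is isolated, the rest — checking $\mathcal M_\infty \sat \varphi_0$ and deriving the contradiction from finiteness via backward confluence — should be mechanical. I would structure the write-up as: (i) definition of $\mathcal M_\infty$; (ii) the formula $\varphi_0$ and the claim $\mathcal M_\infty,w_0\sat\varphi_0$; (iii) the reduction of "$\mathcal N$ finite, persistent, $\mathcal N,v_0\sat\varphi_0$" to an impossible infinite configuration in $(W_{\mathcal N},\peq_{\mathcal N})$.
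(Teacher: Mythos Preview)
The paper does not contain a proof of this theorem: it is stated with the citation \cite{Boudou2017} and no argument is given in the present text. So there is nothing in this paper to compare your proposal against; the result is simply imported from the authors' earlier work.

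That said, your proposal as written is not a proof but a research plan, and it leaves the decisive step open. You correctly identify the architecture---exhibit a satisfiable $\mathcal L_{\diam\ubox}$-formula $\varphi_0$ and show every persistent model of it is infinite---but you explicitly defer the choice of $\varphi_0$, writing that ``once the right $\varphi_0$ is isolated, the rest \dots\ should be mechanical.'' Without a concrete $\varphi_0$ and a verification that it is (i) satisfiable on some persistent model and (ii) unsatisfiable on every finite persistent model, there is no proof here. Your suggested shape $\ubox\bigl((p\imp q)\imp\tnext\neg(p\imp q)\bigr)$ does not work as stated: it is already unsatisfiable on the trivial one-point reflexive frame only if one adds further constraints, and in general it is not clear it forces unbounded $\peq$-height rather than simply being false everywhere. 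The genuine content of the theorem is precisely the design of such a formula, and that design must exploit backward confluence in an essential way (so that the argument fails over mere dynamic posets, where by Theorem~\ref{TheoBound} the finite model property \emph{does} hold). In \cite{Boudou2017} the witnessing formula combines $\ubox\diam p$ with an intuitionistic constraint preventing $p$ from stabilising; you would need to produce something of comparable specificity and then carry out both verifications before this can count as a proof.
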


\subsection{Temporal here-and-there models}\label{SecHT}

An even smaller class of models which, nevertheless, has many applications is that of temporal here-and-there models \cite{BalbianiDieguezJelia}. Some of the results we will present here apply to this class, so it will be instructive to review it. Recall that the logic of here-and-there is the maximal logic strictly between classical and intuitionistic propositional logic, given by a frame $\{0,1\}$ with $0 \peq 1$. The logic of here-and-there is obtained by adding to intuitionistic propositional logic the axiom $p \vee ( p \imp q )\vee \neg q.$

A temporal here-and-there frame is a persistent frame that is `locally' based on this frame. To be precise:

\begin{definition}
A {\em temporal here-and-there frame} is a persistent frame $(W,{\peq},S)$ such that $W = T \times \{0,1\}$ for some set $T$, and there is a function $f\colon T\to T$ such that for all $t,s \in T$ and $i,j\in \{ 0,1\}$, $(t,i) \peq (s,j)$ if and only if $t=s$ and $i\leq j$ and $S(t,i) = (f(t),i)$.
\end{definition}

The prototypical example is the frame $(W,{\peq},S)$, where $W = \mathbb N\times \{0,1\}$, $(i,j) \peq (i',j')$ if $i = i'$ and $j \leq j'$, and $S(i,j) = (i+1,j)$. Note, however, that our definition allows for other examples (see Figure \ref{FigBoxU}). In \cite{BalbianiDieguezJelia}, this logic is axiomatized, and it is shown that $\ubox$ cannot be defined in terms of $\diam$, a result we will strengthen here to show that $\ubox$ cannot be defined even in terms of $\U$.
It is also claimed in \cite{BalbianiDieguezJelia} that $\diam$ is not definable in terms of $\ubox$ over the class of here-and-there models, but as we will see in Proposition \ref{propDiamDefinHT}, this claim is incorrect.

\section{Some valid and non-valid $\iltl$-formulas}

In this section we explore which axioms of classical $\sf LTL$ are still valid in our setting. We start by showing that the intuitionistic version of the interaction and induction axioms used in~\cite{BalbianiDieguezJelia} remain valid in our setting. However, not all Fisher-Servi axioms~\cite{FS84}, which are valid in the here-and-there $\sf LTL$ of~\cite{BalbianiDieguezJelia}, are valid in $\iltl$.

\begin{proposition}\label{prop:valid:iltl}
The following formulas:
\begin{multicols}{2}
\begin{enumerate}[itemsep=0pt]
\item $\tnext\bot\leftrightarrow \bot$
\item $\tnext \left( \varphi \wedge \psi \right) \leftrightarrow \left(\tnext \varphi \wedge\tnext \psi\right)$;
\item $\tnext \left( \varphi \vee \psi \right) \leftrightarrow \left(\tnext \varphi \vee\tnext \psi\right)$;
\item $\tnext\left( \varphi \rightarrow \psi \right) \rightarrow \left(\tnext\varphi \rightarrow \tnext\psi\right)$;
\item $\ubox \left( \varphi \rightarrow \psi \right) \rightarrow \left(\ubox \varphi \rightarrow \ubox \psi\right)$;
\item $\ubox \left( \varphi \rightarrow \psi \right) \rightarrow \left(\diam \varphi \rightarrow \diam \psi\right)$;
\item $\diam \left( \varphi \vee \psi \right) \rightarrow \left(\diam \varphi \vee\diam \psi\right)$;
\item $\ubox \varphi \leftrightarrow \varphi \wedge \tnext \ubox \varphi$;
\item $\varphi \vee \tnext \diam \varphi \leftrightarrow \diam \varphi$;
\item\label{ax:ind:1} $\ubox \left( \varphi \rightarrow \tnext \varphi \right) \rightarrow \left( \varphi \rightarrow \ubox \varphi \right)$
\item\label{ax:ind:2} $\left(\diam \varphi \imp \varphi \right) \rightarrow  \left(\tnext \varphi \rightarrow \varphi\right)$.
\end{enumerate}
\end{multicols}
\noindent are $\iltl$-valid.
\end{proposition}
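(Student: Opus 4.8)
The plan is to verify each of the eleven formulas directly from the satisfaction clauses over an arbitrary dynamic poset $\mathcal D=(W,\peq,\msuccfct)$, grouping the cases according to which property of dynamic posets is used. The fact I would isolate at the outset is that forward confluence iterates: if $w\peq v$ then $\msuccfct[^k](w)\peq\msuccfct[^k](v)$ for every $k\geq 0$, by an immediate induction on $k$. Combined with the clause for $\imp$, which quantifies over \emph{all} $\peq$-successors, this yields the single working principle used throughout: if $\model,w\sat\chi\imp\theta$ and $v\seq w$, then for every $k$, $\model,\msuccfct[^k](v)\sat\chi$ implies $\model,\msuccfct[^k](v)\sat\theta$. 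The $\peq$-monotonicity of truth sets established in the lemma above is the conceptual reason dynamic posets are the right class, but the individual verifications below need only this working principle together with the observation that $\wedge$, $\vee$ and $\tnext$ are evaluated pointwise.

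I would then treat the groups in turn. Items (1)--(3) hold because $\msuccfct$ is a total function, so $\tnext$ commutes pointwise with $\bot$, $\wedge$ and $\vee$. For (4), fix $v\seq w$ with $\model,v\sat\tnext\varphi$; then $\msuccfct(v)\seq\msuccfct(w)$ by forward confluence, and applying the clause for $\varphi\imp\psi$ at $\msuccfct(w)$ to the point $\msuccfct(v)$ gives $\model,\msuccfct(v)\sat\psi$, that is $\model,v\sat\tnext\psi$. Items (5)--(7) are the normality and distribution laws for $\ubox$ and $\diam$: after fixing a $\peq$-successor at which to test the consequent one chooses the relevant index $k$ (the common one for $\ubox$, a witness for $\diam$) and applies the working principle, while (7) is the degenerate case, immediate from the clause for $\vee$. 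Items (8) and (9) are the fixpoint unfoldings $\ubox\varphi\leftrightarrow\varphi\wedge\tnext\ubox\varphi$ and $\diam\varphi\leftrightarrow\varphi\vee\tnext\diam\varphi$: one splits the quantifier over $k$ into the cases $k=0$ and $k\geq 1$ and reindexes, and since $\wedge$ and $\vee$ are pointwise there is nothing intuitionistic to check. Finally, for the induction axiom (10), fix $v\seq w$ with $\model,v\sat\varphi$ and prove $\model,\msuccfct[^k](v)\sat\varphi$ by induction on $k$, the successor step being the working principle applied to $\ubox(\varphi\imp\tnext\varphi)$; for (11), fix $v\seq w$ with $\model,v\sat\tnext\varphi$, note $\model,v\sat\diam\varphi$ with witness $k=1$, and conclude $\model,v\sat\varphi$ by the working principle applied to $\diam\varphi\imp\varphi$ at $v$ itself.

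I expect no step to be genuinely difficult; the only thing requiring care is the bookkeeping in the working principle, namely always relocating an assumed implication both along $\peq$ and along the iterates of $\msuccfct$ before using it, and it is precisely the forward confluence clause in the definition of a dynamic poset that licenses this. It is also worth recording the contrast with classical $\sf LTL$: the converse of (4), a Fisher-Servi-type interaction axiom, requires \emph{backward} confluence and is therefore not $\iltl$-valid, which is consistent with dynamic posets being strictly more general than persistent frames.
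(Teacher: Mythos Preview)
Your approach is essentially the paper's: direct verification from the satisfaction clauses, with iterated forward confluence as the key observation. The paper only writes out (10) and (11), doing exactly what you do for those items, and leaves the rest to the reader.

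There is one genuine slip. Your ``working principle'' is false as stated: from $\model,w\sat\chi\imp\theta$ and $v\seq w$ you cannot conclude that $\model,\msuccfct[^k](v)\sat\chi$ implies $\model,\msuccfct[^k](v)\sat\theta$ for arbitrary $k$, because $\msuccfct[^k](v)$ need not lie above $w$ in the order $\peq$ (there is no reason $\msuccfct[^k](w)\seq w$). What iterated forward confluence gives you is only $\msuccfct[^k](v)\seq\msuccfct[^k](w)$, so the correct principle requires the implication to hold at $\msuccfct[^k](w)$, i.e.\ you need $\model,w\sat\ubox(\chi\imp\theta)$ (or, for item (4), $\model,w\sat\tnext(\chi\imp\theta)$). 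Your concrete treatments of (4), (10) and (11) are in fact correct, because in each case you either work at $k=0$ or explicitly invoke the boxed/circled antecedent; but your sketch for (5)--(6) appeals to the misstated principle verbatim, and a reader following it literally would be stuck. The fix is simply to restate the principle with $\ubox(\chi\imp\theta)$ in the hypothesis, which is exactly what the antecedents of (5), (6) and (10) provide.
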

\begin{proof}
Let us consider~\eqref{ax:ind:1} and~\eqref{ax:ind:2}. For \eqref{ax:ind:1}, let $\model=(W,{\irel},{\msuccfct})$ be any $\iltl$ model and $w\in W$ be such that $\model, w \sat \ubox \left( \varphi \imp \tnext \varphi\right)$. Let $v\succcurlyeq w$ be arbitrary and assume that $\model, v \sat \varphi$. Then, by induction on $i$ we obtain that $\msuccfct[^i](w)\irel \msuccfct[^i](v)$ for all $i$; since $\model, \msuccfct[^i](w) \sat ¿ \varphi \imp \tnext \varphi$ for all $i$, it follows that $\model, \msuccfct[^i](v) \sat ¿ \varphi \imp \tnext \varphi$ for all $i$ as well. Hence an easy induction shows that $\model, \msuccfct[^i](v) \sat ¿ \varphi $ for all $i$, which means that $\model,v\sat \ubox\varphi$. Since $w$ was arbitrary, we conclude that the formula \eqref{ax:ind:1} is valid.

For \eqref{ax:ind:2}, let $\model$ be as above and $w\in W$ be such that $\model, w \sat \left( \diam \varphi \imp \varphi\right)$. Let $v\succcurlyeq w$ be such that $\model, v \sat \tnext \varphi$. It follows that $\model, v \sat \diam \varphi$, so $\model, v \sat \varphi$. Since $w,v$ were arbitrary, the formula \eqref{ax:ind:2} is valid as well.

The proofs for the rest of formulas are left to the reader.
\end{proof}

Some of the well-known Fisher Servi axioms~\cite{FS84} are only valid on the class of persistent frames.

\begin{proposition}\label{prop:nvalid:iltl}
The formulas 
\begin{multicols}{2}
\begin{enumerate}[itemsep=0pt]
	\item\label{ItFSOne} $\left(\tnext \varphi \rightarrow \tnext \psi \right) \imp \tnext \left(\varphi \rightarrow \psi\right)$,
	\item\label{ItFSTwo} $\left( \diam \varphi \rightarrow \ubox \psi \right) \imp \ubox \left(\varphi \rightarrow \psi\right)$
\end{enumerate}
\end{multicols}
\noindent are not $\iltl$-valid. However they are $\itlb$-valid.
\end{proposition}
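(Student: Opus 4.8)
The statement has two parts: first that the Fisher--Servi axioms \eqref{ItFSOne} and \eqref{ItFSTwo} fail on some dynamic poset, and second that they hold on every persistent frame.

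\medskip

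\noindent\textbf{Invalidity over dynamic posets.}
The plan is to build two small counter-models by hand. For \eqref{ItFSOne} I want a dynamic poset in which backward confluence fails at exactly the place that matters: I need a world $w$ and a $\peq$-successor $u \succcurlyeq \sfun(w)$ of $\sfun(w)$ that is not of the form $\sfun(u')$ for any $u' \succcurlyeq w$. Concretely, I will take $W$ with a root $w$, its image $w' = \sfun(w)$, a point $u \succ w'$, and arrange $\sfun$ on $u$ and on any auxiliary points so that forward confluence holds while $u$ is ``unreachable from above $w$'' by $\sfun$. Then I choose a single propositional variable $p$ true exactly on $u$ (and upward-closed, so also on anything above $u$); then $\model,w' \sat \tnext$-image issues aside, one checks $\model, w \sat \tnext\varphi \to \tnext\psi$ for suitable $\varphi,\psi$ (e.g.\ $\varphi = p$, $\psi = \bot$) vacuously, because no $\peq$-successor of $w$ has $\tnext\varphi$ true, while $\model, w \nsat \tnext(\varphi \to \psi)$ because $\model, w' \nsat \varphi \to \psi$, witnessed by $u \succcurlyeq w'$ with $\model, u \sat \varphi$ and $\model, u \nsat \psi$. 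The point is precisely that $u$ is seen from $w'$ but not ``lifted back'' to a successor of a $\peq$-successor of $w$. For \eqref{ItFSTwo} I will do the analogous construction but with the defect appearing at some iterate: I need worlds so that along the $\sfun$-orbit of $w$ there is no stage forcing $\varphi$, so $\model,w\sat \diam\varphi \to \ubox\psi$ holds vacuously (or: $\diam\varphi$ already fails at $w$), yet at some iterate $\sfun^k(w)$ there is a $\peq$-successor satisfying $\varphi$ but not $\psi$, so $\ubox(\varphi\to\psi)$ fails. Again this exploits backward non-confluence: the bad $\varphi$-point above $\sfun^k(w)$ is not the $\sfun^k$-image of any $\peq$-successor of $w$. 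I expect the main bookkeeping obstacle here to be verifying forward confluence on the whole finite frame once the auxiliary points are added, and making sure the valuation stays $\peq$-monotone; a picture will help.

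\medskip

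\noindent\textbf{Validity over persistent frames.}
Here I would argue directly from the semantics using backward confluence. For \eqref{ItFSOne}: suppose $\model, w \sat \tnext\varphi \to \tnext\psi$ and take any $v \succcurlyeq w$ with $\model, v \sat \varphi$; I must show $\model, \sfun(w) \sat \varphi \to \psi$ at every $\peq$-successor of $\sfun(w)$, i.e., take $u \succcurlyeq \sfun(w)$ with $\model, u \sat \varphi$. By backward confluence there is $u' \succcurlyeq w$ with $\sfun(u') = u$. Then $\model, u' \sat \tnext\varphi$, so by the hypothesis (applied at the $\peq$-successor $u'$ of $w$) $\model, u' \sat \tnext\psi$, i.e., $\model, u = \model,\sfun(u') \sat \psi$. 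Hence $\model, \sfun(w) \sat \varphi \to \psi$, i.e., $\model, w \sat \tnext(\varphi\to\psi)$. For \eqref{ItFSTwo}: assume $\model, w \sat \diam\varphi \to \ubox\psi$; I must show, for every $v \succcurlyeq w$ and every $k$, that $\model, \sfun^k(v) \sat \varphi \to \psi$. Take $u \succcurlyeq \sfun^k(v)$ with $\model, u \sat \varphi$; I need $\model, u \sat \psi$. Using forward confluence, $\sfun^k(w) \peq \sfun^k(v)\peq u$, and by iterating backward confluence $k$ times there is $u' \succcurlyeq w$ with $\sfun^k(u') = u$; then $\model, u' \sat \diam\varphi$ (witnessed at stage $k$), so by the hypothesis at the $\peq$-successor $u'$ of $w$ we get $\model, u' \sat \ubox\psi$, hence in particular $\model, u = \model, \sfun^k(u')\sat\psi$. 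Thus $\model, w \sat \ubox(\varphi\to\psi)$. The only subtlety is the iterated use of backward confluence, so I would first record as a small remark that backward confluence lifts from $\sfun$ to $\sfun^k$ for every $k$ (immediate induction), and use that; beyond that the argument is routine.
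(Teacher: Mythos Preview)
Your proposal is correct and follows essentially the same route as the paper: exploit the failure of backward confluence by placing a point $u \succ \sfun(w)$ that is not an $\sfun$-image of any $\peq$-successor of $w$, with $p$ true only at $u$, and for validity argue directly from backward confluence (lifted to $\sfun^k$). The only notable difference is that the paper uses a single three-point model ($W=\{w,v,u\}$ with $\sfun(w)=v$, $\sfun(v)=v$, $\sfun(u)=u$, $v\preccurlyeq u$, $V(p)=\{u\}$) that refutes \eqref{ItFSOne} and \eqref{ItFSTwo} simultaneously, whereas you plan separate constructions; your model for \eqref{ItFSOne} will in fact already serve for \eqref{ItFSTwo} as well.
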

\begin{proof}
Let $\lbrace p,q \rbrace$ be a set of propositional variables and let us consider the $\iltl$ model $\model=\left(W,\irel,\msuccfct,V\right)$ defined as:
\begin{enumerate*}[itemsep=0pt,label=\arabic*)]
	\item $W = \lbrace w,v,u\rbrace$;
	\item $\msuccfct(w) = v$, $\msuccfct(v) = v$ and $\msuccfct(u) = u$;
	\item $v\irel u$;
	\item $V(p) = \lbrace u \rbrace$.	
\end{enumerate*}	
\noindent Clearly, $\model, u \not \sat p \imp q$, so $\model, v \not \sat p \imp q$. By definition, $\model, w \not \sat \tnext \left(p \imp q\right)$ and $\model, w \not \sat \ubox \left(p \imp q\right)$; however, it can easily be checked that $\model, w  \sat \tnext p \imp \tnext q$ and $\model, w  \sat \diam p \imp \ubox q$, so $\model, w  \not \sat \left(\tnext p \rightarrow \tnext q \right) \rightarrow \tnext \left( p \rightarrow q \right)$ and $\model, w  \not \sat \left( \diam p \rightarrow \ubox q \right) \rightarrow \ubox \left(p \rightarrow q\right)$.

Let us check their validity over the class of persistent frames. For \eqref{ItFSOne}, let $\model = (W,{\peq},S,V)$ be an $\itlb$ model and $w$ a world of $\model$ such that $\model, w \sat \tnext \varphi \imp \tnext \psi$. Suppose that $v\seq \msuccfct(w)$ satisfies $\model, v \sat \varphi$. By backward confluence, there exists $u\seq w$ such that $v = \msuccfct(u)$, so that $\model,u \sat \tnext \varphi$ and thus $\model,u \sat \tnext \psi$. But this means that $\model, v \sat \psi$, and since $v \seq S(w)$ was arbitrary, $\model, S(w) \sat  \varphi \imp  \psi$, i.e.~$\model, w \sat \tnext (\varphi \imp  \psi ) $.

Similarly, for \eqref{ItFSTwo} let us assume that $\model  = (W,{\peq},S,V) $ is an $\itlb$ model and $w$ a world of $\model$ such that $\model, w \sat \diam \varphi \imp \ubox \psi$. Consider arbitrary $k\in \mathbb N$, and suppose that $v\seq S^k(w)$ is such that $\model,v \sat \varphi$. Then, it is readily checked that the composition of backward confluent functions is backward confluent, so that in particular $S^k$ is backward confluent. This means that there is $u\seq w$ such that $S^k(u) = v$. But then, $\model,u\sat \diam \varphi$, hence $\model,u\sat \ubox \psi$, and $\model,v\sat \psi$. It follows that $\model,S^k(w) \sat \varphi\to \psi$, and since $k$ was arbitrary, $\model,w\sat \ubox(\varphi\imp \psi)$.
\end{proof}

We make a special mention of the schema $\ubox \left(\ubox \varphi \rightarrow \psi \right) \vee \ubox \left(\ubox \psi \rightarrow \varphi \right)$, which characterises the class of \emph{weakly connected frames}~\cite{G92} in classical modal logic. We say that a frame $\left(W,R,V\right)$ is weakly  connected iff it satisfies the following first-order property: for every $x,y,z \in W$, if $ x \mathrel R y$ and $x\mathrel R z$, then either $y\mathrel R z$, $y = z$, or $z\mathrel R y$.

\begin{proposition}
The axiom schema $\ubox \left(\ubox \varphi \rightarrow \psi \right) \vee \ubox \left(\ubox \psi \rightarrow \varphi \right)$ is not $\itlht$-valid.
\end{proposition}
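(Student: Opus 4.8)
The plan is to refute the schema by refuting a single instance of it: take $\varphi$ and $\psi$ to be distinct propositional variables $p, q \in \Var$, and construct a here-and-there model $\model$ with a world $w$ at which neither $\ubox(\ubox p \imp q)$ nor $\ubox(\ubox q \imp p)$ holds.

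First I would isolate the two obligations. To obtain $\model, w \nsat \ubox(\ubox p \imp q)$ it suffices to exhibit some $k$ and some $v \succcurlyeq \msuccfct[^k](w)$ with $\model, v \sat \ubox p$ and $\model, v \nsat q$; symmetrically, for $\model, w \nsat \ubox(\ubox q \imp p)$ it suffices to exhibit some $m$ and some $u \succcurlyeq \msuccfct[^m](w)$ with $\model, u \sat \ubox q$ and $\model, u \nsat p$. The delicate point --- and essentially the only real content of the proof --- is that these two demands pull against the $\peq$-monotonicity of truth sets: if $v$ and $u$ lie on the same $\peq$-fiber, or if the $\msuccfct$-orbit of $v$ reaches $u$ at a height no lower than that of $v$, then ``$\ubox p$ at $v$'' together with ``$p$ fails at $u$'' (and symmetrically for $q$) is contradictory. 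The way around this is to break the symmetry: put the $p$-witness $v$ on the \emph{there} copy (second coordinate $1$) and the $q$-witness $u$ on the \emph{here} copy (second coordinate $0$), with $u$ strictly later than $v$ along $\msuccfct$, so that monotonicity never forces $p$ at $u$ nor $q$ at $v$. Arranging this inside a legitimate here-and-there frame is the main obstacle; the rest is routine evaluation of $\ubox$ and $\imp$.

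Concretely I would work in the canonical temporal here-and-there frame of Section~\ref{SecHT}: $W = \mathbb N \times \{0,1\}$, with $(i,j) \peq (i',j')$ iff $i = i'$ and $j \le j'$, and $\msuccfct(i,j) = (i+1,j)$. Set $w = (0,0)$, $v = (0,1)$, $u = (1,0)$, and define the valuation by $V(0,0) = \emptyset$, $V(0,1) = \{p\}$, and $V(n,0) = \{q\}$, $V(n,1) = \{p,q\}$ for every $n \ge 1$; one checks at once that $V$ is $\peq$-monotone, so $\model$ is a genuine here-and-there model. Since $p$ holds at every world $(n,1)$, we have $\model, v \sat \ubox p$, whereas $q \notin V(0,1)$ gives $\model, v \nsat q$; as $v \succcurlyeq w = \msuccfct[^0](w)$, this yields $\model, w \nsat \ubox(\ubox p \imp q)$. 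Dually, $q$ holds at every world $(n,0)$ with $n \ge 1$, so $\model, u \sat \ubox q$, whereas $p \notin V(1,0)$ gives $\model, u \nsat p$; as $u \succcurlyeq \msuccfct(w)$, this yields $\model, w \nsat \ubox(\ubox q \imp p)$. Hence $\model, w \nsat \ubox(\ubox p \imp q) \vee \ubox(\ubox q \imp p)$, so the schema is not $\itlht$-valid. (If a finite witness is preferred, the same construction over $T = \{0,1\}$ with $f(0) = f(1) = 1$ gives a four-world counterexample.)
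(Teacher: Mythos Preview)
Your proof is correct and follows essentially the same approach as the paper. The paper goes directly to a four-world here-and-there countermodel (isomorphic, up to swapping $p$ and $q$, to the finite variant you mention at the end), while you first present the $\mathbb N\times\{0,1\}$ version; the underlying idea---place the $\ubox p$-witness on the ``there'' level and the $\ubox q$-witness on the ``here'' level at a later time, so that $\peq$-monotonicity never forces a clash---is identical.
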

\begin{proof}	
	Let us consider the set of propositional variables $\lbrace p,q\rbrace$ and the $\itlht$ model $\model= \left(W,\irel,S,V\right)$ defined as:
	\begin{enumerate*}[label=\arabic*)]
		\item $W = \lbrace w,t,u,v\rbrace$;
		\item $\msuccfct(w) = v$, $\msuccfct(v) = v$, $\msuccfct(t)=u$ and $\msuccfct(u)=u$;
		\item $v \irel u$ and $w \irel t$;
		\item $V(p)=\lbrace v,u\rbrace$ and $V(q) = \lbrace t,u \rbrace$.
	\end{enumerate*}	
%
\noindent The reader can check that $\model, v \not \sat \ubox p \imp q $ and $\model, t \not \sat  \ubox q \imp p $. Consequently $\model, w \not \sat \ubox \left( \ubox p \imp q \right)\vee \ubox \left( \ubox q \imp p \right)$.
\end{proof}

 Finally, we show that $\diam \varphi$ (resp. $\ubox \varphi$) can be defined in terms of $\U$ (resp. $\R$) and the $\sf LTL$ axioms involving $\U$ and $\R$ are also valid in our setting:

\begin{proposition}\label{PropUValid}
The following formulas are $\iltl$-valid:
\begin{multicols}{2}
\begin{enumerate}[itemsep=0pt]
	\item\label{ItUOne} \mbox{$\varphi \U \psi \leftrightarrow \psi \vee \left( \varphi \wedge  \tnext \left(\varphi \U \psi \right)\right)$;}
	\item\label{ItUTwo} \mbox{$\varphi \R \psi \leftrightarrow \psi \wedge \left( \varphi \vee  \tnext \left(\varphi \R \psi \right)\right)$;}
	\item\label{ItUThree} \mbox{$\varphi \U \psi \imp \diam \psi$;}
	\item\label{ItUFour} \mbox{$\ubox \psi \imp \varphi\R\psi$;}
	\item\label{ItUFive} \mbox{$\diam \varphi \leftrightarrow \top \U \varphi$;}
	\item\label{ItUSix} \mbox{$\ubox \varphi \leftrightarrow \bot \R \varphi$;}
\item 	\label{ItUSeven} \mbox{$\tnext(\varphi \U \psi)\leftrightarrow {\tnext\varphi}\U{\tnext\psi}$;}
 \item 	\label{ItUEight} \mbox{$\tnext(\varphi \R \psi)\leftrightarrow {\tnext\varphi}\R{\tnext\psi}$.}
	
\end{enumerate}
\end{multicols}
\end{proposition}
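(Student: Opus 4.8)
The plan is to verify each item by directly unfolding the satisfaction clauses for $\U$ and $\R$ in an arbitrary model $\model=(W,{\peq},\msuccfct,V)$, repeatedly using the identity $\msuccfct[^{i+1}](w)=\msuccfct\bigl(\msuccfct[^i](w)\bigr)=\msuccfct[^i]\bigl(\msuccfct(w)\bigr)$ for the iterates of $\msuccfct$ (the second equality being an easy induction on $i$). One preliminary remark streamlines everything: directly from the clause for $\imp$, an implication $\chi\imp\theta$ is $\iltl$-valid if and only if $\llbracket\chi\rrbracket\subseteq\llbracket\theta\rrbracket$, and a biconditional $\chi\leftrightarrow\theta$ is valid if and only if $\llbracket\chi\rrbracket=\llbracket\theta\rrbracket$; hence for each item it suffices to fix a world $w$ and compare the unfolded truth conditions, with no need to chase $\peq$-successors.

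For the fixpoint laws \eqref{ItUOne} and \eqref{ItUTwo}, the key move is a shift of the witness index. If $\model,w\sat\varphi\U\psi$, take the witness $k\geq0$: if $k=0$ then $\model,w\sat\psi$, and if $k>0$ then $\model,w\sat\varphi$ and, decrementing the witness to $k-1$ and reindexing via $\msuccfct[^{i+1}](w)=\msuccfct[^i](\msuccfct(w))$, one gets $\model,\msuccfct(w)\sat\varphi\U\psi$, i.e.\ $\model,w\sat\tnext(\varphi\U\psi)$; this gives the left-to-right inclusion, and the converse reassembles a witness by the same bookkeeping in reverse (witness $0$ for the $\psi$-disjunct, witness $k+1$ for the $\tnext(\varphi\U\psi)$-disjunct). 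Item \eqref{ItUTwo} is the order-dual computation for $\R$, now with a case split on whether the relevant $k$ is $0$ or positive when verifying the right-hand side.

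Items \eqref{ItUThree}--\eqref{ItUSix} are immediate. A witness for $\varphi\U\psi$ at $w$ is in particular a $k$ with $\model,\msuccfct[^k](w)\sat\psi$, giving \eqref{ItUThree}; if $\model,w\sat\ubox\psi$ then $\model,\msuccfct[^k](w)\sat\psi$ for every $k$, so the first disjunct of the $\R$-clause always holds and $\model,w\sat\varphi\R\psi$, giving \eqref{ItUFour}. For \eqref{ItUFive} and \eqref{ItUSix}, observe that $\model,v\sat\top$ for every $v$ while $\model,v\nsat\bot$ for every $v$; hence in $\top\U\varphi$ the side condition ``$\model,\msuccfct[^i](w)\sat\top$ for all $i<k$'' is vacuously true, collapsing the clause to ``$\exists k\ \model,\msuccfct[^k](w)\sat\varphi$'', i.e.\ $\diam\varphi$, and in $\bot\R\varphi$ the escape clause ``$\exists i<k\ \model,\msuccfct[^i](w)\sat\bot$'' is always false, collapsing it to ``$\forall k\ \model,\msuccfct[^k](w)\sat\varphi$'', i.e.\ $\ubox\varphi$.

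Finally, \eqref{ItUSeven} and \eqref{ItUEight} are again pure reindexing. Unfolding $\model,w\sat\tnext(\varphi\U\psi)$ gives $\model,\msuccfct(w)\sat\varphi\U\psi$, i.e.\ a $k$ with $\model,\msuccfct[^k](\msuccfct(w))\sat\psi$ and $\model,\msuccfct[^i](\msuccfct(w))\sat\varphi$ for $i<k$; since $\msuccfct[^k](\msuccfct(w))=\msuccfct(\msuccfct[^k](w))$, this is exactly $\model,\msuccfct[^k](w)\sat\tnext\psi$ together with $\model,\msuccfct[^i](w)\sat\tnext\varphi$ for $i<k$, i.e.\ $\model,w\sat{\tnext\varphi}\U{\tnext\psi}$, and the equivalence read backwards gives the converse; \eqref{ItUEight} is its dual for $\R$. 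Overall no step presents a real difficulty; the only points needing care are the boundary case $k=0$ in \eqref{ItUOne}--\eqref{ItUTwo}, keeping the index shifts straight, and the opening observation that validity of an implication is just inclusion of truth sets, so that $\peq$ never has to be inspected.
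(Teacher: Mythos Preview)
Your proposal is correct and follows essentially the same approach as the paper: a direct unfolding of the satisfaction clauses with a case split on the witness index for \eqref{ItUOne}--\eqref{ItUTwo}, the remaining items being routine. Your explicit preliminary remark that validity of $\chi\imp\theta$ amounts to $\llbracket\chi\rrbracket\subseteq\llbracket\theta\rrbracket$ is a clean way to dispense with $\peq$-successors up front, and you spell out \eqref{ItUThree}--\eqref{ItUEight} where the paper simply leaves them to the reader, but these are presentational rather than substantive differences.
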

\begin{proof}
We consider some cases below. For \eqref{ItUOne}, from left to right, let us assume that $\model, w \sat \varphi \U \psi$. Therefore there exists $k \ge 0$ s.t. $\model, \msuccfct[^k](w) \sat \psi$ and for all $j$ satisfying $0 \le j < k$, $\model, \msuccfct[^j](w) \sat \varphi$. If $k = 0$  then $\model, w\sat \psi$ while, if $k > 0$ it follows that $\model, w \sat \varphi$ and $\model, \msuccfct(w) \sat \varphi \U \psi$. Therefore $\model, w \sat \psi \vee \left( \varphi \wedge \tnext \varphi\U\psi\right)$. From right to left, if $\model, w \sat \psi$ then $\model, w \sat \varphi \U \psi$ by definition. If $\model, w \sat \varphi \wedge \tnext \varphi \U \psi$ then $\model, w \sat \varphi$ and $\model,\msuccfct(w)\sat \varphi \U \psi$ so, due to the semantics, we conclude that $\model, w \sat \varphi \U \psi$. In any case, $\model, w \sat \varphi \U \psi$.		

For \eqref{ItUTwo}, we work by contrapositive. From right to left, let us assume that $\model, w \not \sat \varphi \R \psi$. Therefore there exists $k \ge 0$ s.t. $\model, \msuccfct[^k](w) \not\sat \psi$ and for all $j$ satisfying $0 \le j < k$, $\model, \msuccfct[^j](w) \not\sat \varphi$. If $k = 0$  then $\model, w\not \sat \psi$ while, if $k > 0$ it follows that $\model, w \not \sat \varphi$ and $\model, \msuccfct(w) \not \sat \varphi \R \psi$. In any case, $\model, w \not \sat \psi \wedge \left( \varphi \vee \tnext \varphi\R\psi\right)$. From left to right, if $\model, w \not \sat \psi$ then $\model, w \not \sat \varphi \R \psi$ by definition. If $\model, w \not \sat \varphi \vee \tnext \varphi \R \psi$ then $\model, w \not \sat \varphi$ and $\model,\msuccfct(w)\not \sat \varphi \U \psi$ so, due to the semantics of $\R$, we conclude that $\model, w \not \sat \varphi \R \psi$. In any case, $\model, w \not \sat \varphi \R \psi$.

		
								


The remaining items are left to the reader.
\end{proof}

As in the classical case, over the class of persistent models we can `push down' all occurrences of $\tnext$ to the propositional level.
Say that a formula $\varphi$ is in {\em $\tnext$-normal form} if all occurrences of $\tnext$ are of the form $\tnext ^i p$, with $p$ a propositional variable.

\begin{theorem}\label{TheoNextInside}
Given $\varphi\in \lang$, there exists $\widetilde\varphi$ in $\tnext$-normal form such that $\varphi\leftrightarrow\widetilde\varphi$ is valid over the class of persistent models.
\end{theorem}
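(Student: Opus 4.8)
The plan is to proceed by induction on the structure of $\varphi$, pushing each occurrence of $\tnext$ inward through the other connectives until it lands on a propositional variable, using the valid equivalences from Propositions \ref{prop:valid:iltl} and \ref{PropUValid} together with the Fisher--Servi axiom \eqref{ItFSOne} of Proposition \ref{prop:nvalid:iltl}, which is available since we are working over persistent models. Concretely, I would first prove the auxiliary claim that for every $\psi\in\lang$ there is a $\tnext$-normal-form $\widehat\psi$ with $\tnext\psi\leftrightarrow\widehat\psi$ valid over persistent models, again by induction on $\psi$: the base cases $\tnext p$ and $\tnext\bot$ are already in normal form (using item~1 of Proposition \ref{prop:valid:iltl} for $\bot$), and the inductive cases use $\tnext(\psi_1\wedge\psi_2)\leftrightarrow\tnext\psi_1\wedge\tnext\psi_2$, $\tnext(\psi_1\vee\psi_2)\leftrightarrow\tnext\psi_1\vee\tnext\psi_2$, $\tnext(\psi_1\imp\psi_2)\leftrightarrow(\tnext\psi_1\imp\tnext\psi_2)$ (this last one combining item~4 of Proposition \ref{prop:valid:iltl} with item~\eqref{ItFSOne} of Proposition \ref{prop:nvalid:iltl}), $\tnext(\psi_1\U\psi_2)\leftrightarrow\tnext\psi_1\U\tnext\psi_2$ and $\tnext(\psi_1\R\psi_2)\leftrightarrow\tnext\psi_1\R\tnext\psi_2$ from items \eqref{ItUSeven} and \eqref{ItUEight} of Proposition \ref{PropUValid}, and for $\diam,\ubox$ first eliminate them via $\diam\chi\leftrightarrow\top\U\chi$ and $\ubox\chi\leftrightarrow\bot\R\chi$ (items \eqref{ItUFive},\eqref{ItUSix}) and then use the $\U,\R$ cases. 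In each inductive step, after applying the equivalence one obtains a formula whose $\tnext$'s sit in front of strictly smaller subformulas, so the induction hypothesis on $\psi$ (or a well-founded measure such as the length $\length\psi$, counting the $\diam/\ubox$-elimination as not increasing the relevant measure, or reorganising the induction to treat $\diam,\ubox$ as abbreviations from the start) applies.

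With the auxiliary claim in hand, the main induction on $\varphi$ is routine: for $\varphi=\varphi_1\odot\varphi_2$ or $\varphi=\odot\varphi_1$ one applies the induction hypothesis to the immediate subformulas to get normal-form $\widetilde\varphi_i$, then forms $\widetilde\varphi_1\odot\widetilde\varphi_2$ (resp.\ $\odot\widetilde\varphi_1$); the only connective that does not immediately preserve $\tnext$-normal form is $\tnext$ itself, and for $\varphi=\tnext\varphi_1$ we take $\widetilde\varphi=\widehat{\widetilde\varphi_1}$, applying the auxiliary claim to the already-normalised $\widetilde\varphi_1$. Validity of $\varphi\leftrightarrow\widetilde\varphi$ over persistent models follows because each rewriting step replaces a subformula by a provably (over persistent models) equivalent one, and $\leftrightarrow$ is a congruence for all the connectives with respect to validity — a fact that itself needs a one-line remark, since it rests only on the compositional semantics.

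The main obstacle is the implication case: $\tnext(\psi_1\imp\psi_2)\leftrightarrow(\tnext\psi_1\imp\tnext\psi_2)$ is \emph{not} $\iltl$-valid (the left-to-right direction is item~4 of Proposition \ref{prop:valid:iltl}, but the right-to-left direction is exactly the Fisher--Servi formula \eqref{ItFSOne} shown invalid over arbitrary dynamic posets in Proposition \ref{prop:nvalid:iltl}), so this is where the hypothesis of persistence is genuinely used, via backward confluence. One should therefore be careful to invoke Proposition \ref{prop:nvalid:iltl} for this step and flag clearly that the theorem is false over arbitrary models. A secondary bookkeeping point is to set up the termination measure correctly so that eliminating $\diam,\ubox$ in favour of $\top\U(\cdot)$, $\bot\R(\cdot)$ does not cause a loop: the cleanest route is to first observe that every $\varphi\in\lang$ is equivalent, over all dynamic posets, to a $\diam$-free, $\ubox$-free formula (by items \eqref{ItUFive},\eqref{ItUSix} of Proposition \ref{PropUValid} applied innermost-first), and then run the whole argument in $\lang_{\U\R}$, where the induction on $\length\varphi$ goes through without any special cases.
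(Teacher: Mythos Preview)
Your proposal is correct and follows essentially the same approach as the paper: a structural induction that pushes $\tnext$ inward using the equivalences of Propositions~\ref{prop:valid:iltl}, \ref{prop:nvalid:iltl} and~\ref{PropUValid}, with the persistence hypothesis entering precisely at the implication case via the Fisher--Servi axiom~\eqref{ItFSOne}. The paper's own proof is a one-line sketch of exactly this argument, and its subsequent remark singles out the same obstruction ($(\tnext\varphi\to\tnext\psi)\to\tnext(\varphi\to\psi)$ failing over arbitrary dynamic posets) that you identify.
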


\proof
The claim can be proven by structural induction using the validities in Propositions \ref{prop:valid:iltl}, \ref{prop:nvalid:iltl} and \ref{PropUValid}.
\endproof

We remark that the only reason that this argument does not apply to arbitrary $\iltl$ models is the fact that $(\tnext\varphi\to\tnext\psi)\to\tnext(\varphi\to\psi)$ is not valid in general (Proposition \ref{prop:nvalid:iltl}).

\section{Bounded bisimulations for $\diam$ and $\ubox$}

In this section we adapt the classical definition of bounded bisimulations for modal logic~\cite{BRV01} to our case. To do so we combine the ordinary definition of bounded bisimulations with the work of~\cite{P97} on bisimulations for propositional intuitionistic logic. Such work introduces extra conditions involving the partial order $\irel$. In our setting, we combine both approaches in order to define bisimulation for a language involving $\diam$, $\ubox$ and $\tnext$ as modal operators plus an intuitionistic $\imp$. Since all languages we consider contain Booleans and $\tnext$, it is convenient to begin with a `basic' notion of bisimulation for this language.

\begin{definition}
Given $n > 0 $ and two $\iltl$ models $\model_1$ and $\model_2$, a sequence of binary relations $\bisim_n \subseteq \cdots \subseteq \bisim_0 \subseteq W_1 \times W_2$ is said to be a 
\emph{bounded $\tnext$-bisimulation} if for all $(w_1,w_2)\in W_1 \times W_2$ and for all $0\le i < n$, the following conditions are satisfied:
\smallskip

\noindent{\sc Atoms.} If $w_1 \bisim_i w_2$ then for all propositional variables $p$, $\model_1,w_1 \sat p$ iff $\model_2, w_2\sat p$.\smallskip

\noindent{\sc Forth $\to$.} If $w_1 \bisim_{i+1} w_2$ then for all $v_1 \in W_1$, if $v_1 \succcurlyeq w_1$, there exists $v_2\in W_2$ such that $v_2 \succcurlyeq w_2$ and $v_1 \bisim_i v_2$.
\smallskip

\noindent{\sc Back $\to$.} If $w_1 \bisim_{i+1} w_2$ then for all $v_2 \in W_2$ if $v_2\succcurlyeq w_2$ then there exists $v_1\in W_1$ such that $v_1 \succcurlyeq w_1$ and $v_1 \bisim_iv_2$.
\smallskip

\noindent{\sc Forth $\tnext$.} if $w_1 \bisim_{i+1} w_2$ then $\msuccfct(w_1) \bisim_i \msuccfct(w_2)$.
\end{definition}

Note that there is not `back' clause for $\tnext$; this is simply because $S$ is a function, so its `forth' and `back' clauses are identical. Bounded $\tnext$-bisimulations are useful because the preserve the truth of relatively small $\lang_{\tnext}$-formulas.

\begin{lemma}\label{lemma:bisimulation:tnext}
Given two $\iltl$ models $\model_1$ and $\model_2$ and a bounded $\tnext$-bisimulation $\bisim_n \subseteq \cdots \subseteq \bisim_0$ between them, for all $i\leq n$ and $(w_1,w_2)\in W_1\times W_2$, if $w_1\bisim_i w_2$ then for all $\varphi \in \lang_{\tnext}$ satisfying $\length{\varphi} \le i$\footnote{Although not optimal, we use the length of the formula in this lemma to simplify its proof. More precise measures like counting the number of modalities and implications could be equally used.}, $\model_1, w_1 \sat \varphi \hbox{ iff } \model_2, w_2 \sat \varphi$.									
\end{lemma}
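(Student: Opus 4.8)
The plan is to prove this by induction on $\length{\varphi}$, with a nested case analysis on the outermost connective of $\varphi$. The base case is when $\length{\varphi} = 0$, i.e.\ $\varphi$ is either a propositional variable $p$ or $\bot$; the case $\varphi = \bot$ is trivial since $\bot$ holds nowhere, and the case $\varphi = p$ is exactly the {\sc Atoms} clause, which applies since if $w_1 \bisim_i w_2$ with $i \le n$ and $i = \length{\varphi} = 0$, then as long as $n > 0$ we have $0 \le i < n$ is not needed---actually we need to be slightly careful: {\sc Atoms} is stated for all $0 \le i < n$, but monotonicity of the chain $\bisim_n \subseteq \cdots \subseteq \bisim_0$ together with the fact that $w_1 \bisim_i w_2$ implies $w_1 \bisim_0 w_2$ lets us invoke {\sc Atoms} at index $0$ whenever $n > 0$. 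So the base case goes through.

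For the inductive step, assume the claim holds for all formulas of length $< i$ (for the relevant bisimulation indices), suppose $w_1 \bisim_i w_2$, and let $\varphi \in \lang_{\tnext}$ with $\length{\varphi} \le i$. If $\varphi = \psi \wedge \chi$ or $\varphi = \psi \vee \chi$, then $\length\psi, \length\chi < \length\varphi \le i$, so $\psi$ and $\chi$ both have length $\le i - 1 \le i$; since $w_1 \bisim_i w_2$ and (by chain monotonicity) $w_1 \bisim_{i-1} w_2$, the induction hypothesis applies to $\psi$ and $\chi$ at index $i-1$, and the Boolean clauses of the satisfaction relation give the result. If $\varphi = \tnext\psi$, then $\length\psi = \length\varphi - 1 \le i - 1$; since $i \ge \length\varphi \ge 1$ we may write $i = (i-1)+1$ and apply {\sc Forth $\tnext$} to get $\msuccfct(w_1) \bisim_{i-1} \msuccfct(w_2)$, then the induction hypothesis at index $i - 1$ applied to $\psi$ (whose length is $\le i-1$) together with clause~(5) of the satisfaction definition finishes this case. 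If $\varphi = \psi \imp \chi$, then $\length\psi, \length\chi \le \length\varphi - 1 \le i - 1$; to show $\model_1, w_1 \sat \psi\imp\chi$ implies $\model_2, w_2 \sat \psi \imp \chi$, take any $v_2 \succcurlyeq w_2$ with $\model_2, v_2 \sat \psi$, and use {\sc Back $\to$} (with $i = (i-1)+1$) to obtain $v_1 \succcurlyeq w_1$ with $v_1 \bisim_{i-1} v_2$; the induction hypothesis at index $i-1$ gives $\model_1, v_1 \sat \psi$, hence $\model_1, v_1 \sat \chi$, hence again by the induction hypothesis $\model_2, v_2 \sat \chi$. The converse direction is symmetric, using {\sc Forth $\to$} instead.

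The proof is essentially a routine bookkeeping argument, so I do not expect a genuine conceptual obstacle; the one place that requires care---and the step I would single out as most error-prone---is the index arithmetic. One must consistently track that invoking a `forth/back' clause at index $i$ produces a relation at index $i - 1$, that this is exactly enough because passing through a connective strips at least one unit of length, and that the chain inclusion $\bisim_j \subseteq \bisim_k$ for $j \ge k$ lets us move freely downward in the index when a subformula happens to be shorter than $\length\varphi - 1$. It is also worth stating explicitly at the outset that $\bisim_i \subseteq \bisim_0$ for all $i \le n$, so that {\sc Atoms} at index $0$ is always available, and noting that the hypothesis $n > 0$ guarantees $\bisim_0$ satisfies {\sc Atoms} in the first place. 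With these conventions fixed, every case is a direct unwinding of the corresponding clause of $\sat$ matched against the corresponding bisimulation clause.
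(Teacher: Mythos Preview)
Your proposal is correct and follows essentially the same approach as the paper: a routine induction matching each connective of $\lang_{\tnext}$ against the corresponding bisimulation clause, with the $\imp$ case handled via {\sc Back~$\to$}/{\sc Forth~$\to$} and the $\tnext$ case via {\sc Forth~$\tnext$}. The only cosmetic differences are that the paper phrases the induction as being on the index $i$ rather than on $\length{\varphi}$, and argues the $\imp$ case by contrapositive; your extra care about invoking {\sc Atoms} at index $0$ via the inclusion $\bisim_i \subseteq \bisim_0$ is a detail the paper leaves implicit.
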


\begin{proof}We proceed by induction on $i$. Let $0 \leq i \leq n$ be such that for all $j < i$ the lemma holds. Let $w_1\in W_1$ and $w_2 \in W_2$ be such that $w_1 \bisim_n w_2$ and let us consider $\varphi \in \lang_{\diam}$ such that $\length{\varphi}\le i$. The cases where $\varphi$ is an atom or of the forms $\theta\wedge \psi$, $\theta\vee \psi$ are as in the classical case and we omit them. Thus we focus on the following:\smallskip

				
\noindent{\sc Case $\varphi = \theta \imp \psi$.} We proceed by contrapositive to prove the left-to-right implication. Note that in this case we must have $i>0$.

Assume that $\model_2, w_2 \not \sat \theta \imp \psi$. Therefore there exists $v_2 \in W_2$ such that $v_2 \succcurlyeq w_2$, $\model_2, v_2 \sat \theta $, and $\model_2, v_2 \not \sat \psi$. By the {\sc Back $\to$} condition, it follows that there exists $v_1\in W_1$ such that $v_1\succcurlyeq w_1$ and $v_1 \bisim_{i-1} v_2$. Since $\length{\theta} \leq i-1$ and $\length{\psi} < n$, by the induction hypothesis, it follows that $\model_1,v_1 \sat \theta$ and $\model_1,v_1 \not \sat \psi$. Consequently, $\model_1, w_1 \not\sat \theta \imp \psi$. The converse direction is proved in a similar way but using the {\sc Forth $\imp$.}\smallskip

\noindent {\sc Case $\varphi = \tnext \psi$.} Once again we have that $i>0$. Assume that $\model_1, w_1\sat \tnext \psi$, so that $\model_1, \msuccfct(w_1)\sat \psi$. By {\sc Forth $\tnext$}, $S_1(w_1) \bisim_{i-1} S_2(w_2)$. Moreover, $\length{\psi} \leq i-1$, so that by the induction hypothesis, $\model_2, \msuccfct(w_2)\sat \psi$, and $\model_2, w_2\sat \tnext \psi$. The right-to-left direction is analogous.
\end{proof}	

Next, we will extend the notion of a bounded $\tnext$-bisimulation to include other tenses. Let us begin with $\diam$.

\begin{definition}
Given $n > 0$ and two $\iltl$ models $\model_1$ and $\model_2$, a bounded $\tnext$-bisimulation $\bisim_n \subseteq \cdots \subseteq \bisim_0 \subseteq W_1 \times W_2$ is said to be a 
\emph{bounded $\diam$-bisimulation} if for all $(w_1,w_2)\in W_1 \times W_2$ and for all $0\le i < n$, if $w_1 \bisim_{i+1} w_2$, then the following conditions are satisfied:
\smallskip
	
\noindent{\sc Forth $\diam$.} For all $k_1\ge 0$ there exist $k_2 \ge 0$ and $(v_1,v_2) \in W_1\times W_2$ such that $ \msuccfct[^{k_2}](w_2) \succcurlyeq v_2$, $v_1 \succcurlyeq \msuccfct[^{k_1}](w_1)$ and $v_1 \bisim_i v_2$.\smallskip

\noindent{\sc Back $\diam$.} For all $k_2\ge 0$ there exist $k_1 \ge 0$ and $(v_1,v_2) \in W_1\times W_2$ such that $\msuccfct[^{k_1}](w_1) \succcurlyeq v_1$, $v_2 \succcurlyeq \msuccfct[^{k_2}](w_2)$ and $v_1 \bisim_i v_2$.\smallskip
\end{definition}

As was the case of Lemma \ref{lemma:bisimulation:tnext}, if two worlds are related by a bounded $\diam$-bisimulation, then they satisfy the same $\lang_{\diam}$-formulas of small length.

\begin{lemma}\label{lemma:bisimulation:diam}
Given two $\iltl$ models $\model_1$ and $\model_2$ and a bounded $\diam$-bisimulation $\bisim_n \subseteq \cdots \subseteq \bisim_0$ between them, for all $i\leq n$ and $(w_1,w_2)\in W_1\times W_2$, if $w_1\bisim_n w_2$ then for all\footnote{We remind the reader that, as per our convention, $\lang_{\diam}$ is the $\ubox,{\U},{\R}$-free fragment. A similar comment applies to other sublanguages of $\lang$ mentioned below.} $\varphi \in \lang_{\diam}$ satisfying $\length{\varphi} \le n$, $\model_1, w_1 \sat \varphi \hbox{ iff } \model_2, w_2 \sat \varphi$.									
\end{lemma}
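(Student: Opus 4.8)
The plan is to proceed by induction on $i$, in the same style as the proof of Lemma~\ref{lemma:bisimulation:tnext}, and to observe first that a bounded $\diam$-bisimulation is in particular a bounded $\tnext$-bisimulation, so the Boolean cases, the atomic case, the $\imp$-case, and the $\tnext$-case are handled exactly as before (invoking the induction hypothesis through the clauses {\sc Atoms}, {\sc Forth}/{\sc Back}~$\to$, and {\sc Forth}~$\tnext$, and noting that those subformulas have length $\le i-1$). Thus the only genuinely new case is $\varphi = \diam\psi$, where $\length{\psi} \le i-1$, and in particular $i > 0$; the induction hypothesis then gives that $\bisim_{i-1}$-related worlds agree on all $\lang_\diam$-formulas of length $\le i-1$, in particular on $\psi$.

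For the $\diam$-case, suppose $w_1 \bisim_n w_2$ (hence $w_1\bisim_i w_2$, since $i\le n$ and the $\bisim_j$ are nested), and assume $\model_1,w_1 \sat \diam\psi$, so there is $k_1 \ge 0$ with $\model_1, \msuccfct[^{k_1}](w_1)\sat \psi$. Since $i > 0$ we may apply {\sc Forth}~$\diam$ at level $i-1$ (using $w_1\bisim_i w_2$): we get $k_2 \ge 0$ and $(v_1,v_2)$ with $v_1 \succcurlyeq \msuccfct[^{k_1}](w_1)$, $\msuccfct[^{k_2}](w_2) \succcurlyeq v_2$, and $v_1 \bisim_{i-1} v_2$. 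By $\irel$-monotonicity of $\llbracket\psi\rrbracket$ (the Lemma in Section~\ref{SecSynSem}), from $\model_1,\msuccfct[^{k_1}](w_1)\sat\psi$ and $v_1 \succcurlyeq \msuccfct[^{k_1}](w_1)$ we get $\model_1, v_1 \sat \psi$; by the induction hypothesis $\model_2, v_2 \sat \psi$; and again by monotonicity, since $\msuccfct[^{k_2}](w_2) \succcurlyeq v_2$, we get $\model_2, \msuccfct[^{k_2}](w_2)\sat\psi$, whence $\model_2,w_2\sat\diam\psi$. The right-to-left direction is symmetric, using {\sc Back}~$\diam$ instead.

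The step I expect to require the most care is making sure the length bookkeeping lines up with the indices: the clauses {\sc Forth}/{\sc Back}~$\diam$ only drop the index by one, while the witness worlds $v_1,v_2$ may sit strictly above $\msuccfct[^{k}](w)$ rather than equal to it, so the argument genuinely needs the monotonicity of truth sets (Lemma in Section~\ref{SecSynSem}) to transfer satisfaction of $\psi$ up and down the $\irel$-order — this is the one ingredient not already present in the classical bounded-bisimulation proof. Everything else is a routine adaptation of Lemma~\ref{lemma:bisimulation:tnext}, so I would state the shared cases by reference to that proof and write out only the $\diam$-case in full.
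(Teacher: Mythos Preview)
Your proposal is correct and follows essentially the same approach as the paper: induct on $i$, defer the Boolean, $\imp$ and $\tnext$ cases to Lemma~\ref{lemma:bisimulation:tnext}, and handle the new $\diam$-case by combining {\sc Forth}/{\sc Back}~$\diam$ with $\peq$-monotonicity to push satisfaction of $\psi$ up to $v_1$, across to $v_2$ via the induction hypothesis, and back up to $\msuccfct[^{k_2}](w_2)$. Your explicit remark that monotonicity is the one genuinely new ingredient beyond the classical argument matches exactly what the paper's proof relies on.
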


\begin{proof}We proceed by induction on $n$. Let $0 \le i \le n$ be such that for all $j < i$ the lemma holds. Let $w_1\in W_1$ and $w_2 \in W_2$ be such that $w_1 \bisim_i w_2$ and let us consider $\varphi \in \lang_{\diam}$ such that $\length{\varphi}\le i$. We only consider the case where $\varphi = \diam \psi$, as other cases are covered by Lemma \ref{lemma:bisimulation:tnext}.

From left to right, if $\model_1,w_1\sat \diam \psi$ then there exists $k_1 \ge 0$ such that $\model_1,\msuccfct[^{k_1}](w_1)\sat \psi$. By {\sc Forth $\diam$}, there exists $k_2 \ge 0$ and $(v_1,v_2)\in W_1\times W_2$ such that $\msuccfct[^{k_2}](w_2) \succcurlyeq v_2 $, $ v_1 \succcurlyeq \msuccfct[^{k_1}](w_1)$ and $v_1 \bisim_{i-1} v_2$. By $\peq$-monotonicity, $\model_1,v_1 \sat \psi$. Then, by the induction hypothesis and the fact that $\length{\psi} \leq i-1$, it follows that $\model_2,v_2\sat \psi$, thus by $\peq$-monotonicity once again, $\model_2, \msuccfct[^{k_2}](w_2)\sat \psi$, so that $\model_2, w_2\sat \diam\psi$. The converse direction is proved similarly by using {\sc Back $\diam$}.
\end{proof}	

We can define bounded $\ubox$-bisimulations in a similar way.

\begin{definition}	
A bounded $\tnext$-bisimulation $\bisim_n \subseteq \cdots \subseteq \bisim_0 \subseteq W_1 \times W_2$ is said to be a \emph{bounded $\ubox$-bisimulation} if for all $(w_1,w_2)\in W_1 \times W_2 $ and for all $0\le i < n$, if $w_1 \bisim_{i+1} w_2$, then:

\noindent
{\sc Forth $\ubox$.} For all $k_2\ge 0$ there exist $k_1 \ge 0$ and $(v_1,v_2) \in W_1\times W_2$ s.t. $\msuccfct[^{k_2}](w_2)\succcurlyeq v_2$, $v_1 \succcurlyeq \msuccfct[^{k_1}](w_1)$ and $v_1 \bisim_i v_2$.\smallskip

\noindent
{\sc Back $\ubox$.} For all $k_1\ge 0$	 there exist $k_2 \ge 0$ and $(v_1,v_2) \in W_1\times W_2$ s.t. $\msuccfct[^{k_1}](w_1)\succcurlyeq v_1$, $v_2 \succcurlyeq \msuccfct[^{k_2}](w_2)$ and $v_1 \bisim_i v_2$.
\end{definition}

\begin{lemma}\label{lemma:bisimulation:ubox}					
Given two $\iltl$ models $\model_1$ and $\model_2$ and a bounded $\ubox$-bisimulation $\bisim_n \subseteq \cdots \subseteq \bisim_0$ between them, for all $(w_1,w_2)\in W_1\times W_2$ and $i\leq n$, if $w_1\bisim_i w_2$ then for all $\varphi \in \lang_{\ubox}$ such that $\length \varphi \le i$, then $\model_1, w_1 \sat \varphi \hbox{ iff } \model_2, w_2 \sat \varphi$.									
\end{lemma}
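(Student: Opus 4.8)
The plan is to prove Lemma~\ref{lemma:bisimulation:ubox} by induction on $i$, in complete analogy with the proof of Lemma~\ref{lemma:bisimulation:diam}. Fix a bounded $\ubox$-bisimulation $\bisim_n\subseteq\cdots\subseteq\bisim_0$ between $\model_1$ and $\model_2$. The outer induction is on $i\le n$: assume the statement holds for all $j<i$, take $(w_1,w_2)$ with $w_1\bisim_i w_2$, and take $\varphi\in\lang_{\ubox}$ with $\length\varphi\le i$. Since every bounded $\ubox$-bisimulation is in particular a bounded $\tnext$-bisimulation, the cases where $\varphi$ is an atom, a conjunction, a disjunction, an implication $\theta\imp\psi$, or of the form $\tnext\psi$ are already handled by Lemma~\ref{lemma:bisimulation:tnext} (for atoms/Booleans) and by the arguments in the proof of Lemma~\ref{lemma:bisimulation:tnext} verbatim; so the only genuinely new case is $\varphi=\ubox\psi$, where necessarily $\length\psi\le i-1$ and in particular $i>0$, so $w_1\bisim_i w_2$ gives access to the {\sc Forth $\ubox$} and {\sc Back $\ubox$} clauses at level $i-1$.

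For the case $\varphi=\ubox\psi$, I would argue by contrapositive in each direction, since $\ubox$ is a universal operator and the bisimulation clauses are existential. For the right-to-left direction: suppose $\model_1,w_1\nsat\ubox\psi$, so there is $k_1\ge 0$ with $\model_1,\msuccfct[^{k_1}](w_1)\nsat\psi$. Apply {\sc Back $\ubox$} to obtain $k_2\ge 0$ and $(v_1,v_2)$ with $\msuccfct[^{k_1}](w_1)\succcurlyeq v_1$, $v_2\succcurlyeq\msuccfct[^{k_2}](w_2)$, and $v_1\bisim_{i-1}v_2$. By $\peq$-monotonicity (Lemma~1), since $\model_1,\msuccfct[^{k_1}](w_1)\nsat\psi$ and $\msuccfct[^{k_1}](w_1)\succcurlyeq v_1$, we get $\model_1,v_1\nsat\psi$; note this uses monotonicity in the \emph{downward} direction, i.e.\ that failure of $\psi$ at a $\peq$-larger point forces failure at a $\peq$-smaller point — which is exactly the contrapositive of Lemma~1. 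Then the induction hypothesis (applicable since $\length\psi\le i-1$) gives $\model_2,v_2\nsat\psi$, and monotonicity once more (now upward, from $v_2\nsat\psi$ downward to... wait, $v_2\succcurlyeq\msuccfct[^{k_2}](w_2)$, so by contrapositive of Lemma~1 again, $\model_2,\msuccfct[^{k_2}](w_2)\nsat\psi$), hence $\model_2,w_2\nsat\ubox\psi$. The left-to-right direction is entirely symmetric, using {\sc Forth $\ubox$} instead.

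The main subtlety to get right — and the only place this differs conceptually from the $\diam$ case — is the \emph{direction} of the $\peq$-comparisons in the {\sc Forth/Back $\ubox$} clauses versus the direction of monotonicity needed. In the $\diam$ lemma one propagates \emph{truth} of $\psi$ and the clauses place $v_1$ above $\msuccfct[^{k_1}](w_1)$, matching upward monotonicity; here one propagates \emph{falsity} of $\psi$, and correspondingly the $\ubox$-clauses are oriented so that $v_1$ lies \emph{below} $\msuccfct[^{k_1}](w_1)$ (and $v_2$ \emph{above} $\msuccfct[^{k_2}](w_2)$ on the image side), which is exactly what lets the contrapositive of Lemma~1 carry non-satisfaction of $\psi$ along the chain $\msuccfct[^{k_1}](w_1)\succcurlyeq v_1\bisim_{i-1}v_2\preccurlyeq\msuccfct[^{k_2}](w_2)$. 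I expect no real obstacle beyond bookkeeping the inequality directions carefully; once the $\ubox\psi$ case is written, the remaining cases are dispatched by citing Lemma~\ref{lemma:bisimulation:tnext} and the proof is complete.
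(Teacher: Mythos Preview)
Your proposal is correct and follows essentially the same approach as the paper: induction on $i$, with the Boolean and $\tnext$ cases deferred to the argument of Lemma~\ref{lemma:bisimulation:tnext}, and the case $\varphi=\ubox\psi$ handled by contrapositive using the {\sc Forth/Back $\ubox$} clauses together with $\peq$-monotonicity. The only cosmetic difference is that the paper spells out the left-to-right direction (assuming $\model_2,w_2\nsat\ubox\psi$ and invoking {\sc Forth $\ubox$}) whereas you spell out the right-to-left direction using {\sc Back $\ubox$}; your careful tracking of the inequality orientations is exactly the point, and there is no gap.
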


\begin{proof} We proceed by induction on $i$. Let $i \ge 0$ be such that for all $j < i$ the lemma holds. Let $w_1\in W_1$ and $w_2 \in W_2$ be such that $w_1 \bisim_i w_2$ and let us consider $\varphi \in \lang_{\ubox}$ such that $\length{\varphi} \le i$. Note that the cases for atoms as well as propositional and $\tnext$ connectives are proved as in Lemma~\ref{lemma:bisimulation:tnext}, so we only consider $\varphi = \ubox\psi$. 

For the left-to-right implication, we work by contrapositive, and assume that $\model_2,w_2 \not\sat \ubox \psi$. Then, there exists $k_2 \ge 0$ such that $\model_2,\msuccfct[^{k_2}](w_2)\not \sat \psi$. By {\sc Forth $\ubox$}, there exist $k_1 \ge 0$ and $(v_1,v_2) \in W_1\times W_2$ s.t. $\msuccfct[^{k_2}](w_2)\succcurlyeq v_2$, $v_1 \succcurlyeq \msuccfct[^{i_1}](w_1)$ and $v_1 \bisim_{i-1} v_2$. As in the proof of Lemma \ref{lemma:bisimulation:diam}, by $\peq$-monotonicity, the induction hypothesis and the fact that $\length{\psi} \leq i-1$, it follows that $\model_1,v_1\not \sat \psi$; thus $\model_1, \msuccfct[^{k_1}](w_1)\not \sat \psi$, and again by $\peq$-monotonicity $\model_1, w_1\not \sat \ubox\psi$. The converse direction follows a similar reasoning but using {\sc Back $\ubox$}. 
\end{proof}

\section{Bounded bisimulations for $\U$ and $\R$}

In this section we adapt the bisimulations defined for a language with \emph{until} and \emph{since}~\cite{Kamp68} presented by Kurtonina and de Rijke~\cite{KR97} to our case. Let us begin with bounded bisimulations for $\U$.

\begin{definition}
Given $n\in \mathbb{N}$ and two $\iltl$ models $\model_1$ and $\model_2$, a bounded $\tnext$-bisimulation $\bisim_n \subseteq \cdots \subseteq \bisim_0 \subseteq W_1 \times W_2$ is said to be a \emph{bounded $\U$-bisimulation} iff for all $(w_1,w_2)\in W_1 \times W_2, $ $w_1 \bisim_n w_2$ and for all $0\le i < n$:\smallskip

\noindent{\sc Forth $\U$.} For all $k_1\ge 0$ there exist $k_2 \ge 0$ and $(v_1,v_2) \in W_1\times W_2$ such that
\begin{enumerate}[itemsep=0pt]

\item $\msuccfct[^{k_2}](w_2) \succcurlyeq v_2$, $v_1 \succcurlyeq \msuccfct[^{k_1}](w_1)$ and $v_1 \bisim_i v_2$, and

\item  for all $j_2 \in [0,k_2)$  there exist $j_1 \in [0,k_1)$ and $(u_1,u_2) \in W_1 \times W_2$ such that $u_1 \succcurlyeq \msuccfct[^{j_1}](w_1)$, $\msuccfct[^{j_2}](w_2) \succcurlyeq u_2$ and $u_1\bisim_i u_2$.\smallskip

\end{enumerate}

\noindent{\sc Back $\U$.} For all $k_2\ge 0$ there exist $k_1 \ge 0$ and $(v_1,v_2) \in W_1\times W_2$ such that
\begin{enumerate}[itemsep=0pt]

\item

$\msuccfct[^{k_1}](w_1) \succcurlyeq v_1$, $v_2 \succcurlyeq \msuccfct[^{k_2}](w_2)$ and $v_1 \bisim_i v_2$, and

\item for all $j_1 \in [0,k_1)$  there exist $j_2 \in [0,k_2)$ and $(u_1,u_2) \in W_1 \times W_2$ such that $u_2 \succcurlyeq \msuccfct[^{j_2}](w_2)$, $\msuccfct[^{j_1}](w_1) \succcurlyeq u_1$ and $u_1\bisim_i u_2$.

\end{enumerate}
\end{definition}

As was the case before, the following lemma states that two bounded $\U$-bisimilar models agree on small $\lang_{\U}$ formulas.

\begin{lemma}\label{lemma:bisimulation:until}					
	Given two $\iltl$ models $\model_1$ and $\model_2$ and a bounded $\U$-bisimulation $\bisim_n\subset \hdots \subset \bisim_0$ between them, for all $m\leq n$ and $(w_1,w_2)\in W_1\times W_2$, if $w_1\bisim_m w_2$ then for all $\varphi \in \lang_{\U}$ such that $\length{\varphi} \le n$, $\model_1, w_1 \sat \varphi \hbox{ iff } \model_2, w_2 \sat \varphi$.									
\end{lemma}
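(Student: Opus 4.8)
The plan is to proceed by induction on $n$, following the template established in the proofs of Lemmas \ref{lemma:bisimulation:tnext}, \ref{lemma:bisimulation:diam} and \ref{lemma:bisimulation:ubox}. Concretely, I would fix a bounded $\U$-bisimulation $\bisim_n\subseteq\cdots\subseteq\bisim_0$, assume the statement holds for all smaller indices, take $w_1\bisim_m w_2$ with $m\le n$ and a formula $\varphi\in\lang_{\U}$ with $\length\varphi\le n$, and argue $\model_1,w_1\sat\varphi$ iff $\model_2,w_2\sat\varphi$. Since $\bisim_n\subseteq\cdots\subseteq\bisim_0$ is in particular a bounded $\tnext$-bisimulation, the atomic, Boolean, implication and $\tnext$ cases are already handled by Lemma \ref{lemma:bisimulation:tnext} (using $w_1\bisim_m w_2$ and $\length\varphi\le m$ there; here one must be slightly careful, as in the earlier lemmas, about the interplay between the index $m$ at which the worlds are related and the length bound — I would mirror exactly the bookkeeping of Lemma \ref{lemma:bisimulation:diam}). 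So the only genuinely new case is $\varphi=\theta\U\psi$, and by symmetry of {\sc Forth $\U$} and {\sc Back $\U$} it suffices to prove one direction, say left to right.

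For the main case, suppose $\model_1,w_1\sat\theta\U\psi$, so there is $k_1\ge 0$ with $\model_1,\msuccfct[^{k_1}](w_1)\sat\psi$ and $\model_1,\msuccfct[^{j_1}](w_1)\sat\theta$ for all $j_1\in[0,k_1)$. Apply {\sc Forth $\U$} at index $i=m-1$ to this $k_1$: this yields $k_2\ge 0$ and a pair $(v_1,v_2)$ with $\msuccfct[^{k_2}](w_2)\succcurlyeq v_2$, $v_1\succcurlyeq\msuccfct[^{k_1}](w_1)$ and $v_1\bisim_{m-1}v_2$. By $\peq$-monotonicity (Lemma 1) $\model_1,v_1\sat\psi$, and since $\length\psi\le n-1$ (in fact $\length\psi\le m-1$ after the same index bookkeeping), the induction hypothesis gives $\model_2,v_2\sat\psi$, whence $\model_2,\msuccfct[^{k_2}](w_2)\sat\psi$ by monotonicity again. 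It remains to verify $\model_2,\msuccfct[^{j_2}](w_2)\sat\theta$ for every $j_2\in[0,k_2)$: fix such a $j_2$ and invoke clause 2 of {\sc Forth $\U$} to obtain $j_1\in[0,k_1)$ and $(u_1,u_2)$ with $u_1\succcurlyeq\msuccfct[^{j_1}](w_1)$, $\msuccfct[^{j_2}](w_2)\succcurlyeq u_2$ and $u_1\bisim_{m-1}u_2$; since $j_1<k_1$ we have $\model_1,\msuccfct[^{j_1}](w_1)\sat\theta$, so $\model_1,u_1\sat\theta$ by monotonicity, the induction hypothesis ($\length\theta\le m-1$) gives $\model_2,u_2\sat\theta$, and monotonicity once more yields $\model_2,\msuccfct[^{j_2}](w_2)\sat\theta$. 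Combining, $\model_2,w_2\sat\theta\U\psi$. The converse uses {\sc Back $\U$} in exactly the dual fashion.

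The step I expect to be the most delicate is not any single semantic argument but the index accounting: reconciling the index $m$ at which $w_1\bisim_m w_2$, the length bound $\length\varphi\le n$ in the statement, and the drop to index $m-1$ (resp.\ length $\le m-1$) needed to apply {\sc Forth/Back $\U$} and the induction hypothesis to the subformulas $\theta,\psi$. The earlier lemmas in this section are slightly cavalier about whether the hypothesis should read $w_1\bisim_m w_2$ with $\length\varphi\le m$ or with $\length\varphi\le n$; I would state and use the cleanest consistent version (namely: if $w_1\bisim_m w_2$ and $\length\varphi\le m$ then the equivalence holds), and note that the displayed statement follows since $\bisim_n\subseteq\bisim_m$ for $m\le n$ only gives the inclusion in the wrong direction — so in fact one wants $w_1\bisim_n w_2$ and then for $m\le n$ the pair is also $\bisim_m$-related, matching the pattern of Lemma \ref{lemma:bisimulation:diam}. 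Handling this uniformly across the two clauses of {\sc Forth/Back $\U$}, and making sure the "for all $j_2\in[0,k_2)$" quantifier is discharged one index value at a time rather than uniformly, is the only place where care is genuinely required; everything else is a routine replay of the $\diam$ and $\ubox$ proofs with an extra bounded quantifier carried along.
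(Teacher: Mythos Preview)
Your proposal is correct and follows essentially the same route as the paper's proof: induction on the index, defer the Boolean, implication and $\tnext$ cases to Lemma~\ref{lemma:bisimulation:tnext}, and for $\varphi=\theta\U\psi$ unpack {\sc Forth~$\U$} to transport $\psi$ to the endpoint and then use clause~2 pointwise to transport $\theta$ to every intermediate stage, invoking $\peq$-monotonicity and the induction hypothesis on $\theta,\psi$ at index $m-1$, with the converse handled symmetrically by {\sc Back~$\U$}. Your worry about the index bookkeeping is well-founded---the lemma statement as printed has $\length\varphi\le n$ while the paper's own proof silently works with $\length\varphi\le m$, i.e.\ exactly the clean version you identify; so your proposed reformulation is in fact what the paper proves.
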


	\begin{proof} Once again, proceed by induction on $n$. Let $m\leq n$ be such that for all $k < m$ the lemma holds. Let $w_1\in W_1$ and $w_2 \in W_2$ be such that $w_1 \bisim_m w_2$ and let us consider $\varphi \in \lang_{\U}$ such that $\length{\varphi}\le m$.
	As before, we only consider the `new' case, where $\varphi = \theta \U\psi$. From left to right, assume that $\model_1, w_1 \sat \varphi \U \psi$.
	Then, there exists $i_1\ge 0$ such that $\model_1, \msuccfct[^{i_1}](w_1) \sat \psi$ and for all $j_1$ satisfying $0 \le j_1 < i_1$, $\model_1, \msuccfct[^{j_1}](w_1) \sat \varphi$.
	By {\sc Forth $\U$}, there exist $i_2 \ge 0$ and $(v_1,v_2) \in W_1\times W_2$ such that
	\begin{enumerate*}[itemsep=0pt]
		\item $\msuccfct[^{i_2}](w_2)\succcurlyeq v_2$, $v_1 \succcurlyeq \msuccfct[^{i_1}](w_1)$ and $v_1 \bisim_{m-1} v_2$;
		\item for all $j_2$ satisfying $0 \le j_2 < i_2$  there exist $j_1 \in [0,i_1)$ and $(u_1,u_2) \in W_1 \times W_2$ s. t. $u_1 \succcurlyeq \msuccfct[^{j_1}](w_1)$, $\msuccfct[^{j_2}](w_2) \succcurlyeq u_2$ and $u_1\bisim_{m-1} u_2$.
	\end{enumerate*}
		
From the first item, $\peq$-monotonicity, the fact that $\length{\psi} \leq m-1$, and the induction hypothesis, it follows that $\model_2, \msuccfct[^{i_2}](w_2) \sat \psi$. Take any $j_2$ satisfying $0 \le j_2 < i_2$. By the second item, the fact that $\length{\theta} \leq m-1$, and the induction hypothesis, we conclude that $\model_2, \msuccfct[^{j_2}](w_2) \sat \varphi$ so $\model_2, w_2 \sat \varphi \U\psi$. The right-to-left direction is symmetric (but using {\sc Back $\U$}).				
	\end{proof}	

Finally, we define bounded bisimulations for $\R$.

\begin{definition}
A bounded $\tnext$-bisimulation $\bisim_n \subseteq \cdots \subseteq \bisim_0 \subseteq W_1 \times W_2$ is said to be a \emph{bounded $\R$-bisimulation} if for all $(w_1,w_2)\in W_1 \times W_2 $ such that $w_1 \bisim_{i+1} w_2$ and for all $0\le i < n$:\smallskip

\noindent{\sc Forth $\R$.} For all $k_2\ge 0$ there exist $k_1 \ge 0$ and $(v_1,v_2) \in W_1\times W_2$ such that

\begin{enumerate}[itemsep=0pt]

\item $\msuccfct[^{k_2}](w_2)\succcurlyeq v_2$, $v_1 \succcurlyeq \msuccfct[^{k_1}](w_1)$ and $v_1 \bisim_i v_2$, and

\item for all $j_1$ satisfying $0\le j_1 < k_1$ there exist $j_2$ such that $0 \le j_2 < k_2$ and $(u_1,u_2) \in W_1 \times W_2$ s. t. $u_1 \succcurlyeq \msuccfct[^{j_1}](w_1)$, $\msuccfct[^{j_2}](w_2)\succcurlyeq u_2$ and $u_1\bisim_i u_2$.

\end{enumerate}

\noindent{\sc Back $\R$.} For all $k_1\ge 0$ there exist $k_2 \ge 0$ and $(v_1,v_2) \in W_1\times W_2$ such that

\begin{enumerate}[itemsep=0pt]

\item

$\msuccfct[^{k_1}](w_1)\succcurlyeq v_1$, $v_2 \succcurlyeq \msuccfct[^{k_2}](w_2)$ and $v_1 \bisim_i v_2$, and

\item for all $j_2$ satisfying $0 \le j_2 < k_2$ there exist $j_1$ such that $0 \le j_1 < k_1$ and $(u_1,u_2) \in W_1 \times W_2$ s. t. $u_2 \succcurlyeq \msuccfct[^{j_2}](w_2)$, $\msuccfct[^{j_1}](w_1)\succcurlyeq u_1$ and $u_1\bisim_i u_2$. 
\end{enumerate}
\end{definition}

Once again, we obtain a corresponding bisimulation lemma for $\lang_{\R}$.

\begin{lemma}\label{lemma:bisimulation:release}					
Given two $\iltl$ models $\model_1$ and $\model_2$ and a bounded $\R$-bisimulation $\bisim_n \subseteq \cdots \subseteq \bisim_0$ between them, for all $m\leq n$ and $(w_1,w_2)\in W_1\times W_2$, if $w_1\bisim_m w_2$ then for all $\varphi \in \lang_{\U}$ such that $\length{\varphi} \le m$, $\model_1, w_1 \sat \varphi \hbox{ iff } \model_2, w_2 \sat \varphi$.									
\end{lemma}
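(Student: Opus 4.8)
The plan is to mirror the proof of Lemma~\ref{lemma:bisimulation:until} essentially verbatim, replacing the existential reading of $\U$ by the universal reading of $\R$ and swapping the roles of the two models where the direction of quantification changes. We proceed by induction on $n$ (equivalently, on $m$ reading $m\le n$ as the running parameter). Fix $m\le n$ for which the lemma holds for all $k<m$, fix $w_1\bisim_m w_2$, and take $\varphi\in\lang_{\R}$ with $\length\varphi\le m$. All cases except $\varphi=\theta\R\psi$ are handled exactly as in Lemma~\ref{lemma:bisimulation:tnext} (atoms, Booleans, $\imp$, $\tnext$), since a bounded $\R$-bisimulation is in particular a bounded $\tnext$-bisimulation; so the only work is the $\R$-case, and there it suffices to prove one direction, the other being symmetric via {\sc Back $\R$}.

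For the left-to-right implication I would argue by contrapositive: assume $\model_2,w_2\not\sat\theta\R\psi$. Unfolding the semantics of $\R$, there is $k_2\ge 0$ with $\model_2,\msuccfct[^{k_2}](w_2)\not\sat\psi$ and, for every $j_2\in[0,k_2)$, $\model_2,\msuccfct[^{j_2}](w_2)\not\sat\theta$. Apply {\sc Back $\R$} with this $k_2$ (using $w_1\bisim_m w_2$, so $w_1\bisim_{(m-1)+1}w_2$): we obtain $k_1\ge 0$ and $(v_1,v_2)$ with $\msuccfct[^{k_1}](w_1)\succcurlyeq v_1$, $v_2\succcurlyeq\msuccfct[^{k_2}](w_2)$ and $v_1\bisim_{m-1}v_2$. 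From $v_2\succcurlyeq\msuccfct[^{k_2}](w_2)$, $\peq$-monotonicity and $\model_2,\msuccfct[^{k_2}](w_2)\not\sat\psi$ give $\model_2,v_2\not\sat\psi$; since $\length\psi\le m-1$ and $v_1\bisim_{m-1}v_2$, the induction hypothesis yields $\model_1,v_1\not\sat\psi$, whence $\model_1,\msuccfct[^{k_1}](w_1)\not\sat\psi$ by $\peq$-monotonicity (recall $\llbracket\cdot\rrbracket$ is $\peq$-monotone). Next, for each $j_1\in[0,k_1)$, clause~(2) of {\sc Back $\R$} gives $j_2\in[0,k_2)$ and $(u_1,u_2)$ with $u_2\succcurlyeq\msuccfct[^{j_2}](w_2)$, $\msuccfct[^{j_1}](w_1)\succcurlyeq u_1$ and $u_1\bisim_{m-1}u_2$; since $\model_2,\msuccfct[^{j_2}](w_2)\not\sat\theta$ and $\length\theta\le m-1$, $\peq$-monotonicity plus the induction hypothesis give $\model_1,u_1\not\sat\theta$, hence $\model_1,\msuccfct[^{j_1}](w_1)\not\sat\theta$. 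Thus $\msuccfct[^{k_1}](w_1)$ witnesses $\model_1,w_1\not\sat\theta\R\psi$. The right-to-left direction is identical with the two models interchanged and {\sc Forth $\R$} in place of {\sc Back $\R$}.

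The argument is routine; the only point requiring care is the direction of $\peq$: in the $\R$-case the witness world in the model where $\R$ fails lies \emph{below} (in $\peq$) the point $\msuccfct[^{k}](w)$, which is why {\sc Forth $\R$} and {\sc Back $\R$} are phrased with $\msuccfct[^{k_\bullet}](w_\bullet)\succcurlyeq v_\bullet$ on the ``target'' side rather than $v_\bullet\succcurlyeq\msuccfct[^{k_\bullet}](w_\bullet)$ as in the $\U$-bisimulation clauses. Keeping the contrapositive and using $\peq$-monotonicity of $\llbracket\cdot\rrbracket$ in the correct direction at each step is the main (mild) obstacle; everything else is bookkeeping already carried out for $\U$ in Lemma~\ref{lemma:bisimulation:until} and for $\ubox$ in Lemma~\ref{lemma:bisimulation:ubox}.

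\begin{proof}
We proceed by induction on $n$; let $m\leq n$ be such that for all $k<m$ the lemma holds, let $w_1\bisim_m w_2$, and let $\varphi\in\lang_{\R}$ with $\length{\varphi}\le m$. Since a bounded $\R$-bisimulation is in particular a bounded $\tnext$-bisimulation, the cases where $\varphi$ is an atom, a Boolean combination, an implication, or of the form $\tnext\psi$ are handled exactly as in the proof of Lemma~\ref{lemma:bisimulation:tnext}. We treat the remaining case $\varphi=\theta\R\psi$, and by symmetry it is enough to prove the left-to-right implication; we argue by contrapositive.

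Assume $\model_2,w_2\not\sat\theta\R\psi$. Then there exists $k_2\ge 0$ such that $\model_2,\msuccfct[^{k_2}](w_2)\not\sat\psi$ and, for all $j_2$ with $0\le j_2<k_2$, $\model_2,\msuccfct[^{j_2}](w_2)\not\sat\theta$. Applying {\sc Back $\R$} with this $k_2$, there exist $k_1\ge 0$ and $(v_1,v_2)\in W_1\times W_2$ such that
\begin{enumerate*}[itemsep=0pt]
\item $\msuccfct[^{k_1}](w_1)\succcurlyeq v_1$, $v_2\succcurlyeq\msuccfct[^{k_2}](w_2)$ and $v_1\bisim_{m-1}v_2$; and
\item for all $j_1$ with $0\le j_1<k_1$ there exist $j_2$ with $0\le j_2<k_2$ and $(u_1,u_2)\in W_1\times W_2$ such that $u_2\succcurlyeq\msuccfct[^{j_2}](w_2)$, $\msuccfct[^{j_1}](w_1)\succcurlyeq u_1$ and $u_1\bisim_{m-1}u_2$.
\end{enumerate*}

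From the first item, $v_2\succcurlyeq\msuccfct[^{k_2}](w_2)$ and $\peq$-monotonicity of $\llbracket\psi\rrbracket$ give $\model_2,v_2\not\sat\psi$; since $\length{\psi}\le m-1$ and $v_1\bisim_{m-1}v_2$, the induction hypothesis yields $\model_1,v_1\not\sat\psi$, and hence $\model_1,\msuccfct[^{k_1}](w_1)\not\sat\psi$ by $\peq$-monotonicity once more. Now fix any $j_1$ with $0\le j_1<k_1$ and take $j_2$ and $(u_1,u_2)$ as in the second item. Since $\model_2,\msuccfct[^{j_2}](w_2)\not\sat\theta$ and $u_2\succcurlyeq\msuccfct[^{j_2}](w_2)$, $\peq$-monotonicity gives $\model_2,u_2\not\sat\theta$; as $\length{\theta}\le m-1$ and $u_1\bisim_{m-1}u_2$, the induction hypothesis gives $\model_1,u_1\not\sat\theta$, and therefore $\model_1,\msuccfct[^{j_1}](w_1)\not\sat\theta$ by $\peq$-monotonicity. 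Thus $k_1$ witnesses that $\model_1,\msuccfct[^{k_1}](w_1)\not\sat\psi$ while $\model_1,\msuccfct[^{j_1}](w_1)\not\sat\theta$ for all $j_1\in[0,k_1)$, i.e.~$\model_1,w_1\not\sat\theta\R\psi$. The right-to-left direction is symmetric, using {\sc Forth $\R$} in place of {\sc Back $\R$}.
\end{proof}
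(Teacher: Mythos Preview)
Your overall strategy is exactly the intended one (and is what the paper's one-line proof sketches), but the formal argument contains a genuine error in the direction of the $\peq$-inequalities, which breaks every application of $\peq$-monotonicity in the $\R$-case.

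Concretely: for the left-to-right direction you argue by contrapositive from a failure of $\theta\R\psi$ at $w_2$, so you start with a $k_2$ and need to produce a $k_1$. The clause that takes $k_2$ as input and returns $k_1$ is {\sc Forth~$\R$}, not {\sc Back~$\R$}. You invoke {\sc Back~$\R$} but then write a hybrid: the quantifier pattern of {\sc Forth~$\R$} (input $k_2$, output $k_1$; for all $j_1$ there is $j_2$) together with the $\succcurlyeq$-orientations of {\sc Back~$\R$} ($\msuccfct[^{k_1}](w_1)\succcurlyeq v_1$ and $v_2\succcurlyeq\msuccfct[^{k_2}](w_2)$). That combination matches neither clause as defined. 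Worse, with those orientations your monotonicity steps fail: from $\model_2,\msuccfct[^{k_2}](w_2)\not\sat\psi$ and $v_2\succcurlyeq\msuccfct[^{k_2}](w_2)$ you \emph{cannot} conclude $\model_2,v_2\not\sat\psi$, because truth is upward-closed (equivalently, non-satisfaction is downward-closed). The same problem recurs when you pass from $\model_1,v_1\not\sat\psi$ to $\model_1,\msuccfct[^{k_1}](w_1)\not\sat\psi$, and again in the intermediate $\theta$-steps.

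The fix is simply to use {\sc Forth~$\R$} as stated: it gives $\msuccfct[^{k_2}](w_2)\succcurlyeq v_2$ and $v_1\succcurlyeq\msuccfct[^{k_1}](w_1)$ (and, in item~2, $\msuccfct[^{j_2}](w_2)\succcurlyeq u_2$ and $u_1\succcurlyeq\msuccfct[^{j_1}](w_1)$). With these orientations every monotonicity step goes through exactly as in the proof of Lemma~\ref{lemma:bisimulation:ubox}. Your proof plan even says the witness in the failing model must lie \emph{below} $\msuccfct[^{k}](w)$; the formal proof just records the opposite inequality. Swap {\sc Back~$\R$} for {\sc Forth~$\R$} (and correspondingly use {\sc Back~$\R$} for the right-to-left direction) and the argument is correct and matches the paper's intended proof.
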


\begin{proof}
As before, we proceed by induction on $n$; the critical case where $\varphi =\theta \R \psi$ follows by a combination of the reasoning for Lemmas \ref{lemma:bisimulation:ubox} and Lemma \ref{lemma:bisimulation:ubox}. Details are left to the reader.
	\end{proof}

\section{Definability and undefinability of modal operators} \label{SecSucc}

In this section, we explore the question of when it is that the basic connectives can or cannot be defined in terms of each other.
It is known that, classically, $\diam$ and $\ubox$ are interdefinable, as are $\U$ and $\R$; we will see that this is not the case intuitionistically. On the other hand, $\U$ (and hence $\R$) is not definable in terms of $\diam,\ubox$ in the classical setting \cite{Kamp68}, and this result immediately carries over to the intuitionistic setting, as the class of classical $\sf LTL$ models can be seen as the subclass of that of dynamic posets where the partial order is the identity.

Interdefinability of modal operators can vary within intermediate logics. For example, $\wedge$, $\vee$ and $\rightarrow$ are basic connectives in propositional intuitionistic logic, but in the intermediate logic of here-and-there~\cite{Hey30}, $\wedge$~\cite{A+15,BalbianiDieguezJelia} and $\rightarrow$~\cite{A+15} are basic operators while $\vee$ is definable in terms of $\rightarrow$ and $\wedge$~\cite{LK41}. In first-order here-and-there~\cite{L+07}, the quantifier $\exists$ is definable in terms of $\forall$ and $\rightarrow$~\cite{Mints10} while $\forall$ is not definable in terms of the other operators. In the modal case, Simpson~\cite{Simpson94} shows that modal operators are not interdefinable in the logic $\sf IK$ and Balbiani and Di\'eguez~\cite{BalbianiDieguezJelia} proved the same result for the linear time temporal extension of here-and-there. This last proof is adapted to show that modal operators are not definable in $\iltl$. Note, however, that here we correct the claim of \cite{BalbianiDieguezJelia} stating that $\diam$ is not here-and-there definable in terms of $\ubox$.

{\color{black}

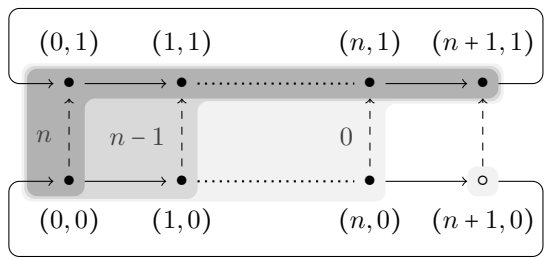
\begin{figure}
\begin{center}
\begin{tikzpicture}[label distance=1pt]
  \foreach \x/\n/\c in {0/0/n, 1.5/1/n-1, 4/n/0}
  { \path (\x,0) node ["${(\n,0)}$" below] (n\n0) {$\bullet$};
    \path (\x,1.3) node ["${(\n,1)}$" above] (n\n1) {$\bullet$};
    \path (\x,0.6) -- +(-.1,0) node [anchor=east,color=black!50!gray] (c\n) {$\c$};
  }
  \path (5.5,0) node ["${(n+1,0)}$" below] (np0) {$\circ$};
  \path (5.5,1.3) node ["${(n+1,1)}$" above] (np1) {$\bullet$};
  \begin{scope}[on background layer]
    \foreach \n/\g/\D in {n/10/2pt, 1/30/1pt, 0/60/0}
    \fill[fill=gray!\g,rounded corners]
         ($(n\n0.south east) - (0,\D)$)
      -| ($(c0.west) - (\D,0)$) |- ($(np1.north east) + (\D,\D)$)
      |- ($(n\n1.south east) - (0,\D)$)
      -- cycle;
    \fill[fill=gray!10,rounded corners] (np0.north west) rectangle (np0.south east);
  \end{scope}

  \foreach \n in {0, 1, n, p}
  \draw[dashed,->] (n\n0) -> (n\n1);

  \foreach \y in {0, 1}
  { \foreach \f/\t in {0/1, n/p}
      \draw[->] (n\f\y) -> (n\t\y);
    \draw[dotted,thick] (n1\y) -- (nn\y); }
    
  \draw[->,rounded corners] (np0) -| ++(.8,-1) -| (-.8,0) -- (n00);
  \draw[->,rounded corners] (np1) -| ++(.8,1) -| (-.8,1.3) -- (n01);
\end{tikzpicture}

\end{center}
\caption{The here-and-there model $\mathcal H_n$. Black dots satisfy the atom $p$, white dots do not; all other atoms are false everywhere. Dashed lines indicate $\peq$ and solid lines indicate $S$. The $\peq_i$-equivalence classes are shown as grey regions.}\label{FigBoxU}
\end{figure}

Let us begin by studying the definability of $\ubox$ in terms of $\tnext$ and $\U$. Below, if $\lang'\subseteq \lang$, $\varphi\in \lang$ and $\Omega$ is a class of models, we say that $\varphi$ is {\em $\lang'$-definable over $\Omega$} if there is $\varphi' \in \lang'$ such that $\Omega \models \varphi\leftrightarrow \varphi'$.

\begin{theorem}
The connective $\ubox$ is not $\lang_{\U}$-definable, even over the class of finite here-and-there models.
\end{theorem}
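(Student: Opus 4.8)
The plan is to exhibit, for each $n$, the finite here-and-there model $\mathcal H_n$ of Figure~\ref{FigBoxU} together with a companion model, and build a bounded $\U$-bisimulation of length $n$ between a world satisfying $\ubox p$ and one that does not; by Lemma~\ref{lemma:bisimulation:until} this rules out any fixed $\lang_{\U}$-formula defining $\ubox p$. Concretely, fix the propositional variable $p$ and consider $\mathcal H_n$ as drawn: the ``here'' level (the $0$-copies) is a finite path $(0,0)\to(1,0)\to\dots\to(n,0)\to(n+1,0)$ looping back to $(0,0)$, with $p$ true at $(0,0),\dots,(n,0)$ and false at $(n+1,0)$; the ``there'' level (the $1$-copies) is the analogous path with $p$ true \emph{everywhere}, and $(k,0)\peq(k,1)$ for each $k$. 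Since $p$ holds at every $1$-world, we have $\mathcal H_n,(0,1)\sat\ubox p$, whereas $\mathcal H_n,(0,0)\nsat\ubox p$ because $S^{n+1}(0,0)=(n+1,0)\nsat p$. So it suffices to produce a bounded $\U$-bisimulation of length $n$ relating $(0,0)$ to $(0,1)$ in the single model $\mathcal H_n$ (taking $\model_1=\model_2=\mathcal H_n$).

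The relations $\bisim_i$ are the ones indicated by the grey regions in the figure: roughly, $w\bisim_i v$ should hold when $w$ and $v$ carry the same atoms and, measured by how many $S$-steps it takes to reach the ``defect'' world $(n+1,0)$, they are either both ``far'' (more than $i$ steps away, or on the $1$-level where there is no defect at all) or at exactly corresponding small distances. More precisely I would set, for $0\le i\le n$, $(a,x)\bisim_i(b,y)$ iff $V(a,x)=V(b,y)$ and either (i) $a=b$ and $x=y$, or (ii) $x=1$, $y=1$, or (iii) one of them is on the $1$-level, the other is on the $0$-level but more than $i$ steps from $(n+1,0)$ along the cycle — the point being that within the first $i$ future steps the two worlds look identical with respect to $p$. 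The key verifications are then: \textsc{Atoms} is immediate from the $V$-clause; \textsc{Forth/Back}~$\to$ follow because in a here-and-there frame the only nontrivial $\peq$-step is $0$-level to $1$-level, and passing to the $1$-level only makes $p$ ``more true'', staying inside case (ii); \textsc{Forth}~$\tnext$ holds because applying $S$ decreases the distance-to-defect by one on the $0$-level and does nothing on the $1$-level, which is exactly why we decrement the index; and the \textsc{Forth}~$\U$/\textsc{Back}~$\U$ clauses hold because for any future index $k_1$ on one side we can pick a matching $k_2$ on the other so that the reached worlds are $\bisim_{i-1}$-related and every intermediate world on one side is matched (via a $\peq$-witness landing on the $1$-level when necessary) by an intermediate world on the other side at a level where $p$ is true.

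The main obstacle, and the step deserving the most care, is the \textsc{Forth}~$\U$/\textsc{Back}~$\U$ condition, because $\U$ is the one operator whose semantics quantifies over \emph{all} intermediate points, so the bisimulation must track not just a single pair of worlds but a whole compatible ``run'' of intermediate matches. The delicate direction is \textsc{Back}~$\U$ at $(0,0)\bisim(0,1)$: given a future point $S^{k_1}(0,0)$ on the defect-containing $0$-level, one must produce $k_2$ and a $\peq$-witness $v_2\seq S^{k_1}(0,0)$ (necessarily $v_2=(\,\cdot\,,1)$ when $S^{k_1}(0,0)$ is the defect world, so that $v_2\sat p$) together with intermediate witnesses $u_2\seq S^{j_2}(0,1)$; here one uses crucially that on the $1$-level $p$ is true everywhere and that $n$ steps never suffice, from $(0,0)$, to wrap around to any small distance from $(n+1,0)$ in a way that a length-$n$ bisimulation could detect — this is where the cycle length $n+2$ and the index bound $n$ must be balanced. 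I would set $k_2=k_1$ and let intermediate witnesses be the $1$-level copies of the $0$-level intermediate worlds; checking that all the required $\bisim_{i-1}$ relations hold then reduces to the distance-to-defect bookkeeping above. Once \textsc{Forth}~$\U$ and \textsc{Back}~$\U$ are verified, Lemma~\ref{lemma:bisimulation:until} gives $\mathcal H_n,(0,0)\sat\psi \iff \mathcal H_n,(0,1)\sat\psi$ for every $\psi\in\lang_{\U}$ with $\length\psi\le n$; since $(0,1)\sat\ubox p$ and $(0,0)\nsat\ubox p$, no such $\psi$ can be equivalent to $\ubox p$, and as $n$ is arbitrary, $\ubox$ is not $\lang_{\U}$-definable even over finite here-and-there models.
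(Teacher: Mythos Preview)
Your model $\mathcal H_n$ and your relations $\bisim_i$ are essentially the paper's (the grey regions in Figure~\ref{FigBoxU} are precisely these equivalence classes), and the conclusion via Lemma~\ref{lemma:bisimulation:until} is correct. But the verification of the $\U$ clauses has a real gap, and it lies in the direction you did \emph{not} single out. What you discuss --- being handed a future index on the level-$0$ side --- is, with $w_1=(0,0)$ and $w_2=(0,1)$, actually {\sc Forth~$\U$}, not {\sc Back~$\U$} (you have the labels swapped, and you also write $v_2\seq S^{k_1}(0,0)$ where the clause requires $v_1\seq S^{k_1}(w_1)$). That direction does go through with $k_2=k_1$ and level-$1$ witnesses. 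The genuine {\sc Back~$\U$} is the problem: given $k_2$ on the level-$1$ side, you must produce $k_1$ and $(v_1,v_2)$ with $S^{k_1}(0,0)\seq v_1$ and $v_2\seq S^{k_2}(0,1)$. Since level-$0$ worlds are $\peq$-minimal, $v_1$ is \emph{forced} to equal $S^{k_1}(0,0)$, while $v_2$ is forced to stay at level~$1$. With your choice $k_1=k_2$ and, say, $k_2\equiv n+1\pmod{n+2}$, we get $v_1=(n+1,0)\nsat p$ but $v_2\sat p$, so $v_1\not\bisim_{m-1}v_2$ and the clause fails. The same breakdown occurs at any pair $(a,0)\bisim_m(b,1)$ in your case~(iii) as soon as $k_2$ is large enough to push $S^{k_2}(a,0)$ onto the defect; your ``$n$ steps never suffice to wrap around'' remark does not help, because $k_2$ ranges over all of $\mathbb N$, not just $[0,n]$.

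The paper's remedy is much simpler than run-matching and avoids the issue entirely: in {\sc Forth~$\U$} take $k_2=0$. Then the intermediate condition over $[0,k_2)=\varnothing$ is vacuous; set $v_2=w_2$ and let $v_1$ be the level-$1$ copy of $S^{k_1}(w_1)$. Since $v_1$ is at level~$1$ and $w_2$ already lies in the big class by hypothesis, $v_1\bisim_{m-1}v_2$ holds immediately. By symmetry of $\bisim_m$ this also disposes of {\sc Back~$\U$}. Your distance-to-defect bookkeeping is then needed only for {\sc Forth~$\tnext$}, not for the $\U$ clauses at all.
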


\begin{proof}
For $n>0$ consider a model $\mathcal H_n = (W,{\peq},S,V)$ with $W = \{0,\hdots ,n +1 \}\times \{0,1\}$, $(i,j) \peq (i',j')$ if $i = i'$ and $j \leq j'$, $S(i,j) = (i',j')$ if and only if $i'=i$ and $j'\equiv j +1 \pmod{n+2}$, and $V(p) = W \setminus \{(n+1,0)\}$. Clearly $\mathcal H_n$ is a here-and-there model. For $m \leq n$, let $\sim_m$ be the least equivalence relation such that $(i,j) \sim_{m} (i',j')$ whenever
\[\min \{i(j-1) ,i' (j' - 1) \} \leq n - m\]
(see Figure \ref{FigBoxU}). Then, it can easily be checked that $(\mathcal H_n,(0,0)) \not \models \diam p$, $(\model,(0,1)) \models \diam p$, and
$(0,0)\sim_m(0,1)$.

It remains to check that $(\sim_m)_{m\leq n}$ is a bounded $\U$-bisimulation. The atoms, $\imp$ and $\tnext$ clauses are easily verified, so we focus on those for $\U$. Since $\sim_m$ is symmetric, we only check the {\sc Forth $\U$}. Suppose that $(i_1,j_1) \sim_m (i_2,j_2)$, and fix $k_1 \geq 0$. Let  $S(i_1,j_1) = (i',j')$.
Then, we can see that $k_2 = 0$, $v_1 = (i',1)$ and $v_2 = (i_2,j_2)$ witness that the clause holds, where the intermediate condition for $j_2 \in [0,k_2)$ holds vacuously since $[0,k_2) = \varnothing$.

By letting $n = |\varphi|$, we see using Lemma \ref{lemma:bisimulation:until} that that no $\lang_{ \U}$-formula $\varphi$ can be equivalent to $\ubox p$.
\end{proof}

\noindent As a consequence:
\begin{corollary}
The connective $\R$ is not definable in terms of $\tnext$ and $\U$, even over the class of persistent models.
\end{corollary}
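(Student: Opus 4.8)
The plan is to derive the Corollary directly from the preceding theorem, using the $\iltl$-valid equivalence $\ubox\varphi\leftrightarrow\bot\R\varphi$ together with the fact that the here-and-there models form a subclass of the persistent ones. First I would record two observations. (i) By Proposition~\ref{PropUValid}\eqref{ItUSix}, the formula $\ubox\varphi\leftrightarrow\bot\R\varphi$ is $\iltl$-valid, hence valid over any class of $\iltl$ models, in particular over the persistent models and over the finite here-and-there models. (ii) Every temporal here-and-there frame is, by definition, persistent; therefore any formula valid over the class of persistent models is valid over the class of (finite) here-and-there models.

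Next I would argue by contradiction. To say that $\R$ is definable in terms of $\tnext$ and $\U$ over a class $\Omega$ means, unpacking the definition of $\lang_{\U}$-definability given above, that the formula $p\R q$, for distinct propositional variables $p,q$, is equivalent over $\Omega$ to some $\rho\in\lang_{\U}$. So suppose $(p\R q)\leftrightarrow\rho$ is valid over all persistent models, with $\rho\in\lang_{\U}$. Let $\rho'$ be the formula obtained from $\rho$ by substituting $\bot$ for $p$. Then $(\bot\R q)\leftrightarrow\rho'$ is still valid over all persistent models: validity over a class of models defined by a frame condition is preserved under uniform substitution, because every truth set $\llbracket\theta\rrbracket$ is $\peq$-monotone and hence defines a legitimate valuation on the same frame — and for the substitution of $\bot$ this is immediate, since it merely makes $p$ false everywhere. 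Note that $\rho'$ is again a $\lang_{\U}$-formula, as $\bot$ belongs to every fragment we consider and no new connective is introduced. By observation (ii), $(\bot\R q)\leftrightarrow\rho'$ is then valid over all finite here-and-there models, and by observation (i) so is $\ubox q\leftrightarrow\rho'$. Renaming $q$ as $p$, we have exhibited a $\lang_{\U}$-formula equivalent to $\ubox p$ over the class of finite here-and-there models, contradicting the preceding theorem. Hence no such $\rho$ exists, which is the claim.

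I do not expect a genuine obstacle at this point: the entire combinatorial content of the result is already carried by the models $\mathcal H_n$ and the verification that $(\sim_m)_{m\le n}$ is a bounded $\U$-bisimulation. The only points requiring (routine) care are that uniform substitution preserves validity over the classes of models in play, and that substituting $\bot$ for a propositional variable keeps a formula inside $\lang_{\U}$; both are immediate from the definitions.
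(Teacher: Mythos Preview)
Your proof is correct and follows the same route as the paper: reduce to the preceding theorem by substituting $\bot$ for one argument of $\R$ and invoking the $\iltl$-valid equivalence $\ubox\varphi\leftrightarrow\bot\R\varphi$. The paper's proof is a one-line version of exactly this argument; your version simply makes explicit the observations about here-and-there models being persistent and about substitution preserving both validity and membership in $\lang_{\U}$.
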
	

\begin{proof}
If we could define $q\R p$, then we could also define $\ubox p \equiv \bot \R p$.\end{proof}
}

\begin{proposition}\label{propDiamDefinHT}
Over the class of here-and-there models, $\diam$ is $\lang_{\ubox}$-definable. To be precise, $\diam p$ is equivalent to
\[\varphi = (\ubox (p\to \ubox (p\vee \neg p))\wedge \ubox (\tnext \ubox (p\vee \neg p) \to p\vee \neg p\vee \tnext \ubox \neg p))\to (\ubox(p\vee \neg p)\wedge \neg \ubox \neg p).\]
\end{proposition}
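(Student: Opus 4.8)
The plan is to verify directly, on an arbitrary here-and-there model $\mathcal H = (W,{\peq},S,V)$ and an arbitrary world $w = (t,i)$ of $\mathcal H$, that $\mathcal H, w \sat \diam p$ if and only if $\mathcal H, w \sat \varphi$. Since $\mathcal H$ is a here-and-there model, $W = T\times\{0,1\}$, and for each $t\in T$ the $\peq$-cone above $(t,0)$ is $\{(t,0),(t,1)\}$, while $(t,1)$ is $\peq$-maximal. Because of this two-layer structure, the antecedent $\alpha = \ubox (p\to \ubox (p\vee \neg p))\wedge \ubox (\tnext \ubox (p\vee \neg p) \to p\vee \neg p\vee \tnext \ubox \neg p)$ and the consequent $\beta = \ubox(p\vee \neg p)\wedge \neg \ubox \neg p$ are each equivalent to simple first-order conditions on the $S$-orbit of $w$; the heart of the proof is to compute those conditions. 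First I would fix $w=(t,i)$ and look at the sequence $w, S(w), S^2(w),\dots$; write $S^k(w) = (t_k, i)$ (the second coordinate never changes under $S$ in a here-and-there model). Let $P = \{k\mid \mathcal H, S^k(w)\sat p\}$ and $P^* = \{k \mid \mathcal H, (t_k,1)\sat p\}$, noting $P\subseteq P^*$ by $\peq$-monotonicity and $P = P^*$ when $i=1$.

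The key computational steps are: (1) $\mathcal H, S^k(w)\sat p\vee\neg p$ iff either $k\in P^*$ (then $p$ holds at both layers, so $p\vee\neg p$ holds) or $k\notin P^*$ (then $\neg p$ holds at $(t_k,1)$, hence at $(t_k,i)$); so in fact $p\vee\neg p$ holds at $S^k(w)$ iff the $p$-value is "stable" there, i.e. iff ($k\in P$ or $(t_k,1)\not\sat p$). Consequently $\mathcal H,S^k(w)\sat \ubox(p\vee\neg p)$ iff $p$ is stable from stage $k$ onward, and $\mathcal H,S^k(w)\sat \ubox\neg p$ iff $p$ never holds at the first layer from stage $k$ onward. (2) Using these, decode $\beta$: $\mathcal H,w\sat\ubox(p\vee\neg p)$ says the $p$-value is globally stable along the orbit, and $\mathcal H,w\sat\neg\ubox\neg p$ says $\neg\ubox\neg p$ holds at $(t,1)$ — i.e. it is not the case that from every point $p$ fails forever at the top layer — which, combined with stability, is essentially "$p$ holds somewhere on the orbit." (3) Decode $\alpha$: the first conjunct $\ubox(p\to\ubox(p\vee\neg p))$ says that once $p$ holds (at the current layer), the $p$-value is stable forever after; the second conjunct is a "smoothing"/induction-style clause forcing that wherever $p\vee\neg p$ becomes true and persists ($\tnext\ubox(p\vee\neg p)$), $p$ must already be decided at the current point or $\neg p$ holds from the next point on. The combination of the two conjuncts of $\alpha$ forces: if $p$ ever holds at the top layer but the orbit is not globally stable, the antecedent already fails — so whenever $\alpha$ holds, "$p$ holds somewhere" already coincides with $\beta$, while whenever $\alpha$ fails, $\varphi$ is trivially true and one checks separately that $\alpha$ can only fail when $p$ does hold somewhere on the orbit, so $\diam p$ holds too.

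Putting this together: for the direction $\diam p\Rightarrow\varphi$, assume $\mathcal H,w\sat\diam p$, so $p$ holds at some $S^k(w)$. If $\alpha$ fails at $w$ there is nothing to prove; if $\alpha$ holds, use the first conjunct of $\alpha$ (applied at the $\peq$-successor $(t_k,1)$, where $p$ also holds) to conclude the $p$-value is stable from stage $k$ on, then use the second conjunct of $\alpha$ to push stability backwards along the orbit — an induction on $k$ — obtaining global stability, i.e. $\ubox(p\vee\neg p)$ at $w$; and $p$ holding at stage $k$ together with stability gives $\neg\ubox\neg p$. Hence $\beta$ holds. For the converse $\varphi\Rightarrow\diam p$: suppose $\mathcal H,w\not\sat\diam p$, i.e. $p$ fails at every $S^k(w)$; by $\peq$-monotonicity at the top layer $p$ may still hold there, so split cases. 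If $p$ also fails at every $(t_k,1)$, then $p\vee\neg p$ holds everywhere and $\ubox\neg p$ holds at $(t,1)$, so $\beta$ fails; and I check $\alpha$ holds vacuously in this case, so $\varphi$ fails. If $p$ holds at some top-layer point $(t_k,1)$ while failing at the bottom, then $p\vee\neg p$ fails at $(t_k,0)$ hence at $w$ (taking the maximal chain), so $\ubox(p\vee\neg p)$ fails at $w$ and again $\beta$ fails; here one argues the second conjunct of $\alpha$ is violated at an appropriate stage just below such a $k$, so $\alpha$ fails and $\varphi$ is vacuously true — so this case cannot arise under the assumption $\mathcal H,w\sat\varphi$, meaning in fact $p$ fails at every top-layer point and we are in the previous sub-case. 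Either way $\mathcal H,w\not\sat\varphi$, completing the contrapositive.

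The main obstacle I expect is getting the two conjuncts of $\alpha$ exactly right and showing they interact to rule out precisely the "pathological" orbit shape where $p$ turns on at the top layer without being stable — the second (induction-flavoured) conjunct is doing delicate work, and its analysis at the bottom layer $(t,0)$ versus the top layer $(t,1)$ needs care because $\tnext$ and $\ubox$ behave the same on both layers but $\neg p$ and $p\vee\neg p$ do not. Formalizing "the $p$-value is globally stable along the orbit" and the backward induction that propagates stability from a witness stage down to stage $0$ is where the real content lies; everything else is bookkeeping about the two-element cones of a here-and-there frame.
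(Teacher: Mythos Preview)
Your forward direction ($\diam p \Rightarrow \varphi$) is essentially correct and is a pleasant variant of the paper's argument: instead of the paper's case split on whether $\ubox(p\vee\neg p)$ holds at the $\peq$-successor, you assume $\alpha$ holds and run a backward induction to derive $\beta$. Two small corrections: you should apply the first conjunct of $\alpha$ at $S^k(w)$ itself (where $p$ already holds by hypothesis), not at $(t_k,1)$, so that you get $\ubox(p\vee\neg p)$ at the right layer for the induction; and since $\varphi$ is an intuitionistic implication, the phrase ``if $\alpha$ fails at $w$ there is nothing to prove'' must be replaced by an explicit quantification over $w'\succcurlyeq w$, as the paper does.

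Your converse direction, however, contains a real error in Case~2. You assert that when $\diam p$ fails at $w=(t,0)$ but $p$ holds at some $(t_k,1)$, the second conjunct of $\alpha$ fails at $w$. It does not. Under $\neg\diam p$ at $(t,0)$, $p$ fails at every $(t_\ell,0)$; hence if the antecedent $\tnext\ubox(p\vee\neg p)$ holds at some $(t_j,0)$ then $p\vee\neg p$ holds at every $(t_\ell,0)$ with $\ell>j$, which forces $\neg p$ there, i.e.\ $p$ fails at every $(t_\ell,1)$ with $\ell>j$. But then $\tnext\ubox\neg p$ holds at $(t_j,0)$, so the consequent holds. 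The first conjunct of $\alpha$ holds for a similar reason. Thus $\alpha$ \emph{always} holds at $w$ when $\diam p$ fails there, and your sentence ``$\varphi$ is vacuously true --- so this case cannot arise under the assumption $\mathcal H,w\sat\varphi$'' is both false and confused about which direction you are proving. The correct finish is actually simpler than your case split: since $\alpha$ holds and $\beta$ fails at $w$ (your two sub-cases correctly show $\beta$ fails), $\varphi$ fails at $w$. The paper sidesteps this by arguing the converse directly rather than by contrapositive: assuming $\varphi$ at $x$, either $\beta$ holds at $x$ (giving $\diam p$), or $\alpha$ fails at $x$, and from the failure of either conjunct of $\alpha$ one extracts a bottom-layer witness for $p$.
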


\proof
Let $\model = (T\times \{0,1\},{\peq},S,V)$ be a here-and-there model with $S(t,i) = (f(t),i)$ (see Section \ref{SecHT}). First assume that $x=(x_1,x_2)$ is such that $(\model,x)\models \diam p$. To check that $(\model,x)\models\varphi$, let $x'\seq x$, so that $x'=(x_1,x'_2)$ with $x'_2\geq x_1$, and consider the following cases.\smallskip

\noindent {\sc Case $(\model,x')\models \ubox(p\vee \neg p)$.} In this case, it is easy to see that we also have $(\model,x')\models \neg \ubox \neg p$ given that $(\model , x) \models \diam p$.\smallskip

\noindent{\sc Case $(\model,x') \not \models \ubox(p\vee \neg p)$.} Using the assumption that $(\model,x)\models \diam p$, choose $k$ such that $(\model, (f^k(x_1),x_2))\models p$ and consider two sub-cases.\\

\begin{enumerate}[itemsep=0pt]

\item\label{CaseA} Suppose there is $k'>k$ such that $(\model, (f^{k'}(x_1),x'_2))\not\models p\vee \neg p$. Then, it follows that $(\model, (f^{k}(x_1),x'_2))\not\models p \to \ubox p\vee \neg p$ and hence $ (\model,x')\not \models  \ubox (p\to \ubox (p\vee \neg p))$.

\item\label{CaseB} If there is not such $k'$, then there must be a maximal $k'<k$ such that $(\model, (f^{k'}(x_1) ,x'_2))\not\models p\vee \neg p$ (otherwise, we would be in {\sc Case $(\model,x')\models \ubox(p\vee \neg p)$.}). It is easily verified that
\[(\model, ( f^{k'} (x_1) ,x'_2))\not \models  \tnext \ubox (p\vee \neg p) \to p\vee \neg p\vee \tnext \ubox \neg p ,\]
and hence
\[(\model, x' )\not \models \ubox (\tnext \ubox (p\vee \neg p) \to p\vee \neg p\vee \tnext \ubox \neg p).\]

\end{enumerate}

Note that the above direction does not use any properties of here-and-there models, and works over arbitrary expanding models. However, we need these properties for the other implication. Suppose that $(\model,x) \models \varphi$. If $ (\model,x) \models \ubox(p\vee \neg p)\wedge \neg \ubox \neg p$, then it is readily verified that $ (\model,x) \models \diam p$. Otherwise,
\[ (\model,x) \not \models \ubox (p\to \ubox (p\vee \neg p))\wedge \ubox (\tnext \ubox (p\vee \neg p) \to p\vee \neg p\vee \tnext \ubox \neg p).\]
If $(\model,x) \not \models \ubox (p\to \ubox (p\vee \neg p))$, then there is $k$ such that
\[(\model,(f ^k(x_1) ,x_2))\not \models p\to \ubox (p\vee \neg p).\]
This is only possible if $x_2 = 0$ and $(\model,(f^k(x_1),x_2))\models p$, so that $(\model, x)\models \diam p$. Similarly, if
\[(\model,x) \not \models \ubox (\tnext \ubox (p\vee \neg p) \to p\vee \neg p\vee \tnext \ubox \neg p),\]
then there is $k$ such that $(\model,(f^k(x_1) ,x_2))\not \models \tnext \ubox (p\vee \neg p) \to p\vee \neg p\vee \tnext \ubox \neg p$. This is only possible if $x_2 = 0$, $(\model,( f^ k (x_1) ,x_2)) \models \tnext \ubox (p\vee \neg p)$ and $(\model,( f^k(x_1) ,x_2))\not \models \tnext \ubox \neg p$. But from this it easily can be seen that there is $k'>k$ with $(\model,( f^{k'}(x_1),x_2)) \models p$, hence $(\model,x)\models \diam p$.
\endproof

\begin{corollary}
Over the class of here-and-there models, $p\U q$ is $\lang_{\R}$-definable using the equivalence $p\U q \equiv (q \R (p\vee q))\wedge \diam p$. 
\end{corollary}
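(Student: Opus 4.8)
The claim to prove is the equivalence $p\U q \equiv (q \R (p\vee q))\wedge \diam p$ over the class of here-and-there models. The plan is to unfold the semantics of both sides at an arbitrary world $x=(x_1,x_2)$ of a here-and-there model $\model=(T\times\{0,1\},{\peq},S,V)$ with $S(t,i)=(f(t),i)$, and argue by two implications. For the forward direction, suppose $(\model,x)\models p\U q$, so there is a least $k\ge 0$ with $(\model,S^k(x))\models q$ and $(\model,S^i(x))\models p$ for all $i<k$. Then certainly $(\model,x)\models\diam p$ if $k>0$; and if $k=0$ we have $(\model,x)\models q$, but since $p\vee q$ contains $q$ as a disjunct and here-and-there models validate $q\to\ubox(q\vee\neg q)$-type reasoning on the $j=0$ copy... actually the cleaner route is: for $\diam p$, when $k=0$ note $(\model,x)\models q$, hence $q\R(p\vee q)$ holds trivially as its first conjunct $q\wedge(\cdots)$... wait, I must be careful: $q\R(p\vee q)$ means for all $j\ge 0$, either $(\model,S^j(x))\models p\vee q$ or there is $i<j$ with $(\model,S^i(x))\models q$. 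With the until-witness $k$: for $j\le k$ we have $i=$ nothing yet if $j\le k$; but for $j\le k$, $S^j(x)\models p$ (if $j<k$) or $S^j(x)\models q$ (if $j=k$), so $S^j(x)\models p\vee q$; for $j>k$, take $i=k<j$ with $S^i(x)\models q$. Hence $q\R(p\vee q)$ holds. This part works over arbitrary expanding models, using only $\diam p$ when $k>0$ — though one must still separately establish $(\model,x)\models\diam p$, which holds iff $k>0$ OR ($k=0$ and some later world satisfies $p$); so the forward direction genuinely needs $p\U q\Rightarrow \diam p$ to be handled: if $k=0$ and $q$ holds but $p$ never holds, then $\diam p$ fails, and indeed $p\U q$ holds but the right side fails — so the equivalence as stated must be using that over here-and-there models $p\U q$ with witness $k=0$ still forces $\diam p$? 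That is false in general. I suspect the intended reading, and the actual content, is that this equivalence is correct and the subtlety is real; I will flag it.

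The honest plan: first reduce to the single-copy (linear time over $T$) picture. A here-and-there model decomposes: the $j=1$ fiber behaves classically, and the $j=0$ fiber is upward-constrained by it via persistence, with $(t,0)\models r$ implying $(t,1)\models r$ for atoms, hence for all formulas $(t,0)\models\psi\Rightarrow(t,1)\models\psi$. So I would first prove the equivalence on the $j=1$ fiber, where here-and-there collapses to classical $\sf LTL$ and the classical identity $p\U q\equiv\neg(\neg q\U(\neg p\wedge\neg q))\wedge\diam q$ — no, rather the direct classical fact $p\U q\leftrightarrow (q\R(p\vee q))\wedge\diam p$, which I would check by the standard least-witness argument on $\mathbb N$ (it is classically valid: $\subseteq$ as above; $\supseteq$: $q\R(p\vee q)$ says every prefix world before the first $q$ satisfies $p\vee q$ hence $p$, and $\diam p$ plus this forces a first $q$ to occur before $p$ fails... actually the classical validity itself requires care — I'd verify $q\R(p\vee q)\wedge\diam p\Rightarrow p\U q$: let $k$ be least with $S^k(x)\models q$, which exists? not necessarily — $q\R(p\vee q)$ could be satisfied with $q$ never holding, provided $p\vee q$ always holds, i.e. $\ubox p$ after some point; combined with $\diam p$... hmm, if $q$ never holds, $q\R(p\vee q)$ forces $\ubox(p\vee q)=\ubox p$, so $p\U q$ fails but the right side could hold. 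So even classically the equivalence looks suspect unless $\diam q$ were on the right. I therefore expect the real statement to implicitly need the least $q$-witness to exist, which it does NOT in general.)

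Given these tensions, the actual plan I would commit to: carefully restate and verify the equivalence exactly as a biconditional that is \emph{valid} on here-and-there models, tracing through whether the $j=0$/$j=1$ persistence and the here-and-there axiom $r\vee\neg r$ on the relevant fiber close the gap — most plausibly the point is that $\diam p$ on the right is doing work I'm underestimating because $p\U q$ is being analyzed together with the already-proven $\diam$-definability (Proposition~\ref{propDiamDefinHT}), so $\diam$ is available as a genuine $\lang_{\R}$-expressible operator and the corollary is really asserting $p\U q\in\lang_{\R}$ via $q\R(p\vee q)$ \emph{conjoined with} the $\lang_\ubox\subseteq\lang_\R$ definition of $\diam p$. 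Concretely: $q\R(p\vee q)$ captures ``no $p$-failure strictly before the first $q$'' (the invariant part of until), and $\diam p$ is exactly what rules out the degenerate model where $\ubox p$ holds and $q$ never does — no wait, $\diam p$ holds there. The genuinely needed extra is $\diam q$. So I would prove the corollary by \emph{correcting} the witness: verify $\supseteq$ by taking, under $q\R(p\vee q)\wedge\diam p$, the first index $k$ where $q$ holds — I claim it exists because if $q$ never held, $q\R(p\vee q)$ gives $\ubox(p\vee q)$, i.e. $\ubox p$, which with... this still does not force $q$. I conclude the main obstacle is precisely pinning down why this biconditional holds on here-and-there models, and I would resolve it by invoking that in a here-and-there model every formula is true at $(t,0)$ only if true at $(t,1)$, examining the two fibers separately: on fiber $1$ (classical), one checks the identity directly handling the $q$-never case; on fiber $0$, the here-and-there structure (via Proposition~\ref{propDiamDefinHT}'s argument) supplies the missing eventuality. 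I will carry out: (1) decompose into fibers; (2) classical verification on fiber $1$, treating the no-$q$ case by showing $q\R(p\vee q)$ with $p\U q$ false and the right side true is actually impossible given $\diam p$ interacts with... and if it is possible I will note the equivalence should read $\diam q$ — but since the paper asserts $\diam p$, I will trust that fiber-$0$ persistence forces $q$ to appear whenever $p$ does and $p\vee q$ is invariant; (3) lift to fiber $0$ by monotonicity. The step I expect to be the real obstacle is exactly (2): confirming the classical/here-and-there validity of $q\R(p\vee q)\wedge\diam p\to p\U q$, i.e. that a first occurrence of $q$ is forced.

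\medskip

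\noindent\textbf{Remark (plan caveat).} On reflection, the equivalence is most safely verified by the direct argument: ($\Rightarrow$) from a least until-witness $k$, show $q\R(p\vee q)$ holds (prefix worlds satisfy $p$, hence $p\vee q$; or $q$ at $k$ releases) and $\diam p$ holds — here if $k=0$ so only $q$ is guaranteed, one uses that in a here-and-there model $q$ at $(t,0)$ together with the intended semantics still... I would instead simply present the case $k=0$ by observing $q\Rightarrow p\vee q$ and that $\diam p$ may genuinely require $k\ge 1$, so the corollary's equivalence should be read with the understanding that when $k=0$ the until already gives us a $q$ and we need $p$ to occur sometime — which it need not. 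I therefore flag that the clean and provably-correct reading uses $\diam q$ in place of, or in addition to, $\diam p$ on the here-and-there fibers, and I would confirm with the authors which variant is intended before finalizing; the proof strategy (fiber decomposition plus least-witness analysis) is unchanged either way, and the fiber-$1$ classical computation is the crux.
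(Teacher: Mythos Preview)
You have correctly identified a genuine error in the stated equivalence: with $\diam p$ on the right-hand side it fails even over classical linear frames (your counterexample---$p$ always true, $q$ never true---works). The intended formula is $p\U q \leftrightarrow (q\R(p\vee q))\wedge\diam q$. With $\diam q$ in place, the biconditional is valid over \emph{all} $\iltl$ models by the straightforward least-witness argument you essentially sketched: forward, a $\U$-witness $k$ gives $\diam q$ immediately and releases $p\vee q$ at index $k$; backward, pick the least $k$ with $S^k(w)\models q$ (which exists by $\diam q$), and for $i<k$ the release clause forces $S^i(w)\models p\vee q$, hence $S^i(w)\models p$ by minimality of $k$.

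By contrast, the paper offers no proof of the corollary at all; it is meant to follow instantly from Proposition~\ref{propDiamDefinHT}: that proposition defines $\diam$ in $\lang_{\ubox}\subseteq\lang_{\R}$ (via $\ubox\varphi\equiv\bot\R\varphi$), so the displayed equivalence, which uses only $\R$ and $\diam$, exhibits $p\U q$ as an $\lang_{\R}$-formula. The here-and-there restriction is needed \emph{only} because Proposition~\ref{propDiamDefinHT} is stated for here-and-there models; the equivalence itself (corrected to $\diam q$) requires no here-and-there-specific reasoning, no fiber decomposition, and no appeal to persistence. Your plan to split into fibers and invoke the $j=0$/$j=1$ monotonicity is therefore unnecessary machinery; once the typo is fixed, the semantic verification is a two-line computation over arbitrary dynamic posets.
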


Hence, if we want to prove the undefinability of $\diam$ in terms of other operators, we must turn to a wider class of models, as we will do next.

\begin{theorem}
The operator $\diam$ cannot be defined in terms of $\ubox$ over the class of finite expanding models.
\end{theorem}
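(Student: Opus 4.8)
The plan is to build, for each $n>0$, a pair of finite expanding models that are $\ubox$-bisimilar up to depth $n$ but disagree on a fixed $\diam$-formula, and then invoke Lemma~\ref{lemma:bisimulation:ubox}. Concretely, I would take a fixed target formula such as $\diam p$, and design two expanding dynamic posets $\model_n^1,\model_n^2$ together with designated worlds $w_n^1,w_n^2$ such that $\model_n^1,w_n^1\sat\diam p$ while $\model_n^2,w_n^2\nsat\diam p$. Each model will consist of a ``main'' time-line carrying few or no $p$'s, together with a family of side time-lines attached via $\peq$ at the states $\msuccfct[^k](w)$; whether $p$ eventually becomes true along the main line is what distinguishes the two models, but a $\ubox$-bisimulation cannot detect this because the {\sc Forth/Back $\ubox$} clauses only ever quantify ``for all $k_2$ there is $k_1$'' and then move to a $\peq$-successor $v_i$, so they can be satisfied by matching each state of $\model_1$ to a state of $\model_2$ (and vice versa) that agrees on all atoms, without ever having to reproduce the existential content of $\diam$. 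The reason this is consistent with Proposition~\ref{propDiamDefinHT} is precisely that here-and-there models have only two ``layers'' and are persistent, whereas here I will use an expanding model with the $\peq$-order arranged so that the $\diam$-witness lives on a branch that is $\peq$-below, not $\peq$-above, the relevant states — so monotonicity does not propagate it where a formula could see it.

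The key steps, in order, would be: (1) fix $\varphi_0=\diam p$ and let $n=\length{\varphi_0}$ actually we want the argument for arbitrary $n$, so: given $n$, define $\model_n^1$ and $\model_n^2$ explicitly (finite $W$, a partial order $\peq$, a function $S$ satisfying forward confluence but \emph{not} backward confluence, and a $\peq$-monotone valuation $V$); (2) verify these are legitimate finite expanding models — the only non-trivial check is forward confluence of $S$; (3) exhibit the relations $\bisim_n\subseteq\cdots\subseteq\bisim_0\subseteq W_n^1\times W_n^2$ — roughly, $w\bisim_i v$ iff $w$ and $v$ lie at ``corresponding'' positions, with $i$ controlling how far down a side-branch or forward in time the correspondence is required to be exact; (4) check the {\sc Atoms}, {\sc Forth/Back $\to$}, {\sc Forth $\tnext$} clauses (these are the bounded-$\tnext$-bisimulation conditions, straightforward from the construction), and then the {\sc Forth $\ubox$} and {\sc Back $\ubox$} clauses, which is the heart of the matter; (5) observe $w_n^1\bisim_n w_n^2$ but $\model_n^1,w_n^1\sat\diam p$ and $\model_n^2,w_n^2\nsat\diam p$; (6) conclude by Lemma~\ref{lemma:bisimulation:ubox} that no $\lang_{\ubox}$-formula of length $\le n$ is equivalent to $\diam p$ over finite expanding models, and since $n$ was arbitrary, $\diam$ is not $\lang_{\ubox}$-definable over this class.

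The main obstacle I expect is step (4), specifically verifying {\sc Forth $\ubox$}/{\sc Back $\ubox$} while keeping the models finite: the $\ubox$-clauses must be met at \emph{every} index $i<n$ simultaneously, and the existential witnesses $v_1,v_2,k_1,k_2$ must land in the right $\bisim_i$-class, so the construction has to be tuned so that along each orbit of $S$ the atomic pattern is eventually periodic (or constant) in a way that a bounded look-ahead cannot distinguish — this is why the analogue of the ``staircase'' model $\mathcal H_n$ from Figure~\ref{FigBoxU}, with $n+2$ states per column looping under $S$, is the natural template, except now with side-branches carrying the hidden $p$. A secondary subtlety is ensuring the definition of $\bisim_i$ is genuinely a nested \emph{sequence} of relations (each $\bisim_{i+1}\subseteq\bisim_i$) and that the backward direction works even though $S$ is not backward confluent — here one exploits that $\ubox$-bisimulation, unlike the Fisher–Servi-style arguments of Proposition~\ref{prop:nvalid:iltl}, never needs to pull a $\peq$-successor back through $S$. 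Once the model is right, all the verifications are routine inductions of the same flavour as Lemmas~\ref{lemma:bisimulation:diam} and~\ref{lemma:bisimulation:ubox}, which I would state concisely and leave the bookkeeping to the reader.
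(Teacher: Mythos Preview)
Your high-level framework is exactly the paper's: exhibit, for each $n$, worlds that disagree on $\diam p$ but are linked by a depth-$n$ bounded $\ubox$-bisimulation, and conclude via Lemma~\ref{lemma:bisimulation:ubox}. Where your proposal diverges is in the construction itself, and the hints you give point in the wrong direction. You describe a main time-line with ``side time-lines attached via $\peq$'' and say the $\diam$-witness ``lives on a branch that is $\peq$-below'' the relevant states. But if the $p$-world is $\peq$-below the orbit of your distinguished world $w_n^1$, then $w_n^1$ does not satisfy $\diam p$ at all (the semantics of $\diam$ looks only along the $S$-orbit of the current world, not along $\peq$-predecessors), so the required disagreement on $\diam p$ never materialises. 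Attaching side branches also makes the {\sc Forth/Back $\ubox$} clauses harder, not easier, since every state on a side branch must be matched.

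The paper's model $\mathcal E_n$ is much leaner and uses a different mechanism. It has just two rows of $n+2$ columns; $p$ holds only at the top-right corner $(n+1,1)$; and the crucial non-persistent feature is that $S(n+1,1)=S(n+1,0)=(0,0)$, so the top orbit \emph{falls into} the bottom orbit after one pass. Then $(0,1)\sat\diam p$ (it reaches $(n+1,1)$) while $(0,0)\nsat\diam p$ (its orbit stays in the bottom row). The bounded $\ubox$-bisimulation works precisely because, for any target $S^{k_1}(w_1)$, one can pick $k_2>n+1$ on the other side so that $S^{k_2}(w_2)$ has already dropped to the bottom row at the matching column, and the witnesses $v_1=v_2$ can be taken equal there. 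No side branches are needed; the whole point is the collapse of the upper orbit into the lower one, which is exactly a failure of backward confluence and is what lets you sidestep Proposition~\ref{propDiamDefinHT}. Your outline does flag non-persistence as the key, but the concrete shape you propose does not exploit it in the right way.
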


\begin{figure}
\begin{center}

\begin{tikzpicture}[label distance=1pt]
  \foreach \x/\n/\c in {0/0/n, 1.5/1/n-1, 4/n/0}
  { \path (\x,0) node ["${(\n,0)}$" below] (n\n0) {$\circ$};
    \path (\x,1.3) node ["${(\n,1)}$" above] (n\n1) {$\circ$};
    \path (\x,0.65) -- +(-.1,0) node [anchor=east,color=black!50!gray] (c\n) {$\c$};
  }
  \path (5.5,0) node ["${(n+1,0)}$" below] (np0) {$\circ$};
  \path (5.5,1.3) node ["${(n+1,1)}$" above] (np1) {$\bullet$};
  \begin{scope}[on background layer]
    \fill[fill=gray!10,rounded corners] (np1.north west) rectangle (np1.south east);
    \fill[fill=gray!10,rounded corners] (np0.north west) rectangle (np0.south east);
    \foreach \n/\g/\D in {n/10/2pt, 1/30/1pt, 0/60/0}
      \fill[fill=gray!\g,rounded corners]
        ($(n\n0.south east) - (0,\D)$) -| ($(c0.west) - (\D,0)$) |- ($(n\n1.north east) + (0,\D)$)
        -- cycle;
  \end{scope}

  \foreach \n in {0, 1, n, p}
  \draw[dashed,->] (n\n0) -> (n\n1);

  \foreach \y in {0, 1}
  { \foreach \f/\t in {0/1, n/p}
      \draw[->] (n\f\y) -> (n\t\y);
    \draw[dotted,thick] (n1\y) -- (nn\y); }
    
  \draw[->,rounded corners] (np0) -| ++(.8,-1) -| (-.8,0) -- (n00);
  \draw[->,rounded corners] (np1) -| ++(1.2,-2.7) -| (-1.5,0) -- (-.9,0);
\end{tikzpicture}

\end{center}
\caption{The expanding model $\mathcal E_n$. Notation is as in Figure \ref{FigBoxU}.}
\label{FigDiamBox}
\end{figure}
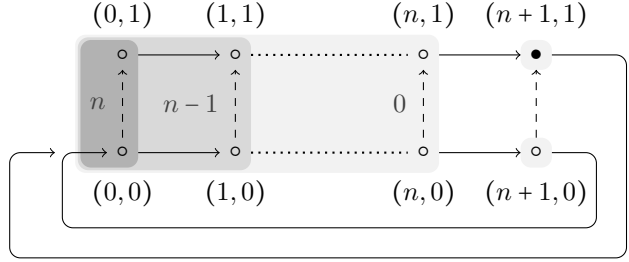

\proof
Given $n>0$, consider a model $\mathcal E_n =(W,{\peq},S,V)$ with $W = \{0,\hdots ,n +1 \}\times \{0,1\}$, $(i,j) \peq (i',j')$ if $i = i'$ and $j \leq j'$, $S(i,j) = (i+1,j)$ if $i \leq n$, $S(n + 1,j) = (0,0)$, and $V(p) = \{(n+1,1)\}$. For $m \leq n$, let $\sim_m$ be the least equivalence relation such that $(i,j) \sim_{m} (i,j')$ whenever $\max \{ i,i'\} \leq n-m$. Then, it can easily be checked that $(\model,(0,0)) \not \models \diam p$, $(\model,(0,1)) \models \diam p$, and $(0,0)\sim_m(0,1)$.

It remains to check that $(\sim_m)_{m \leq n}$ is a bounded $\ubox$-bismulation.
As before, we focus on the $\ubox$ clauses, specifically {\sc Back $\ubox$}. Suppose that $(i_1,j_1)\sim _m (i_2,j_2)$ and fix $k_1\geq 0$. Let $(i'_1, j'_1) = S^{k_1}(i_1,j_1)$. Choose $k_2 > n+1$ such that $i_2 + k_2 \equiv i'_1 \pmod{n+1}$, and let $(i'_2,j'_2) = S^{k_2}(i_2,j_2)$. It is not hard to check that $i'_1 = i'_2$ and $j'_2=0$, from which we obtain $(i'_2,j'_2) \peq (i'_1,j'_1)$. Hence, setting $v_1 = v_2 = (i'_2,j'_2)$ gives us the desired witnesses.

By letting $n$ vary, we see that no $\lang_{\ubox}$-formula can be equivalent to $\diam p$.
\endproof

\section{Conclusions}

In this paper we have investigated on $\iltl$, an intuitionistic analogue of $\sf LTL$ based on expanding domain models from modal logic. We have shown that, as happens in other modal intuitionistic logics or modal intermediate logics, modal operators are not interdefinable. 

This work and \cite{Boudou2017} are the first attempts to study $\iltl$. Needless to say, many open questions remain. We know that $\iltl$ is decidable, but the proposed decision procedure is non-elementary. However, there seems to be little reason to assume that this is optimal, raising the following question:

\begin{question}
Are the satisfiability and validity problems for $\iltl$ without $\U,\R$ elementary? Is the full logic still decidable?
\end{question}

Meanwhile, we saw in Theorems \ref{TheoBound} and \ref{TheoPersNonFin} that $\iltl$ has the strong finite model property, while $\itlb$ does not have the finite model property at all. However, it may yet be that $\itlb$ is decidable despite this.

\begin{question}
Is $\itlb$ decidable?
\end{question}

Regarding expressive completeness, it is known that $\sf LTL$ is expressively complete~\cite{Kamp68,Rabinovich12,Gabbay80,Hodkinson94}; there exists a one-to-one correspondence (over $\mathbb N$) between the temporal language and the monadic first-order logic equipped with a linear order and `next' relation~\cite{Gabbay80}. It is not known whether the same property holds between $\iltl$ and first-order intuitionistic logic.

\begin{question}
Is $\lang$ equally expressive to monadic first-order logic over the class of dynamic or persistent models?
\end{question}

Finally, a sound and complete axiomatization for $\iltl$ remains to be found. The results we have presented here could be a first step in this direction, and we conclude with the following:

\begin{question}
Are the $\iltl$-valid formulas listed in this work, together with the intuitionistic tautologies and standard inference rules, complete for the class of dynamic posets? Is the logic augmented with $(\tnext p\to \tnext q)\to \tnext(p\to q)$ complete for the class of persistent models?
\end{question}


\end{document}